\documentclass[11pt]{amsart}

\usepackage{amsmath,amssymb,amsfonts,mathtools}
\usepackage{amsthm}
\usepackage{enumitem}
\usepackage[colorlinks=true,citecolor=blue,linkcolor=blue,urlcolor=blue]{hyperref}

\numberwithin{equation}{section}

\newtheorem{theorem}{Theorem}[section]
\newtheorem{lemma}[theorem]{Lemma}
\newtheorem{proposition}[theorem]{Proposition}
\newtheorem{corollary}[theorem]{Corollary}

\theoremstyle{definition}
\newtheorem{definition}[theorem]{Definition}
\newtheorem{example}[theorem]{Example}

\theoremstyle{remark}
\newtheorem{remark}[theorem]{Remark}

\usepackage[margin=1.5in]{geometry}

\ifx
\usepackage{amssymb,mathrsfs,graphicx}
\usepackage[colorlinks=true, pdfstartview=FitV, linkcolor=blue,citecolor=deepred, urlcolor=RoyalBlue]{hyperref}
\usepackage[margin=1in]{geometry}
\usepackage{sidecap}
\usepackage{rotating,dsfont}
\usepackage{enumitem}
\usepackage[normalem]{ulem}

\usepackage{colortbl}
\definecolor{black}{rgb}{0.0, 0.0, 0.0}
\definecolor{red}{rgb}{1.0, 0.5, 0.5}
\definecolor{deepred}{rgb}{0.7, 0, 0} 

\provideboolean{shownotes} 
\setboolean{shownotes}{true} 
\newcommand{\margnote}[1]{
\ifthenelse{\boolean{shownotes}}%
{\marginpar{\raggedright\tiny\texttt{#1}}}%
{}%
}
\newcommand{\hole}[1]{
\ifthenelse{\boolean{shownotes}}%
{\begin{center} \fbox{ \rule {.25cm}{0cm} \rule[-.1cm]{0cm}{.4cm}
\parbox{.85\textwidth}{\begin{center} \texttt{#1}\end{center}} \rule
{.25cm}{0cm}}\end{center}} {} }
\fi

\title[Finite-time breakdown of the Euler--alignment system]{Finite-time breakdown \\ of the Euler--alignment system \\ for supercritical initial data}

\thanks{The work of YPC was supported by NRF grant no. 2022R1A2C1002820 and RS-2024-00406821.\newline
\hspace*{0.3cm} The work of ET was supported by ONR grant  N00014-2412659 and NSF grant DMS-2508407.} 

\author[Young-Pil Choi]{Young-Pil Choi}
\address{Department of Mathematics, Yonsei University\newline 
\hspace*{0.3cm} Seoul 03722, Republic of Korea}
\email{ypchoi@yonsei.ac.kr}

\author[Eitan Tadmor]{Eitan Tadmor}
\address{Department of Mathematics and IPST, University of Maryland\newline 
\hspace*{0.3cm} College Park, MD 20742, USA}
\email{tadmor@umd.edu}

\ifx
\numberwithin{equation}{section}
\newtheorem{theorem}{Theorem}[section]
\newtheorem{lemma}[theorem]{Lemma}
\newtheorem{proposition}[theorem]{Proposition}
\newtheorem{corollary}[theorem]{Corollary}
\newtheorem{remark}[theorem]{Remark}

\fi

\newcommand{\R}{\mathbb R}

\newcommand{\bq}{\begin{equation}}
\newcommand{\eq}{\end{equation}}

\newcommand{\lt}{\left}
\newcommand{\rt}{\right}

\newcommand{\pa}{\partial}

\newcommand{\intr}{\int_{\R^d}}

\newcommand{\calP}{\mathcal P}

\newcommand{\calS}{\mathcal S}

\newcommand{\rd}{\textnormal{d}}
\newcommand{\dx}{\textnormal{d}x}

\newcommand{\dt}{\textnormal{d}t}

\newcommand{\dy}{\textnormal{d}y}

\newcommand{\ds}{\textnormal{d}s}

\makeatletter
\def\moverlay{\mathpalette\mov@rlay}
\def\mov@rlay#1#2{\leavevmode\vtop{%
   \baselineskip\z@skip \lineskiplimit-\maxdimen
   \ialign{\hfil$\m@th#1##$\hfil\cr#2\crcr}}}
\newcommand{\charfusion}[3][\mathord]{
    #1{\ifx#1\mathop\vphantom{#2}\fi
        \mathpalette\mov@rlay{#2\cr#3}
      }
    \ifx#1\mathop\expandafter\displaylimits\fi}
\makeatother

\newcommand{\supp}{{\rm supp}}

\newcommand{\tr}{{\rm tr}}

\newcommand{\diag}{{\rm diag}}
\newcommand{\cof}{{\rm cof}}

\newcommand{\nabS}{\nabla_{\!\!{}_S}}
\newcommand{\nabA}{\nabla_{\!\!{}_A}}
\newcommand{\lamin}{\lambda_{\textnormal{min}}}
\newcommand{\rdv}{\textnormal{osc}\{u_0\}}
\renewcommand{\geq}{\geqslant}
\renewcommand{\ge}{\geqslant}
\renewcommand{\leq}{\leqslant}
\renewcommand{\le}{\leqslant}

\DeclareMathOperator*{\esssup}{ess\,sup}

\begin{document}
\allowdisplaybreaks

\date{\today}

 \subjclass[2020]{35Q35, 35B44, 76N10,35L67.}
\keywords{Euler--alignment system, Lagrangian degeneracy, finite-time breakdown.}

\begin{abstract}  
We study finite-time breakdown of classical solutions to the Euler--alignment system through the degeneration of the associated Lagrangian flow. This approach allows us to characterize singularity formation in terms of the loss of local invertibility of the flow and the resulting concentration of density along characteristics. For the case of constant communication kernels, we derive an explicit formula for the flow and obtain an exact pointwise breakdown criterion in arbitrary dimension. In two dimensions, this criterion admits a closed-form reformulation in terms of the symmetric part of the initial velocity gradient and the initial vorticity. For general non-constant kernels, we derive sufficient conditions for finite-time degeneracy by combining a leading compressive mechanism with perturbative control of the nonlocal remainder. These conditions provide quantitative supercritical breakdown criteria in arbitrary dimension, complementing the existing subcritical global-regularity theory for multidimensional Euler--alignment systems.
\end{abstract}

\maketitle \centerline{\date}

\tableofcontents

%
%
%
%
\section{Introduction}
We consider the Euler--alignment system
\begin{align}\label{EA}
\begin{aligned}
&\pa_t \rho + \nabla \cdot (\rho u) = 0, \quad t>0, \ x \in \R^d, \cr
&\pa_t (\rho u) + \nabla \cdot (\rho u \otimes u) = \kappa\rho \intr \phi(x-y) (u(y) - u(x))\rho(y)\,\dy
\end{aligned}
\end{align}
which describes the collective dynamics of interacting agents driven by nonlocal velocity alignment.
Here $\rho=\rho(t,x)$ denotes the density, $u=u(t,x)$ the velocity field, $\kappa>0$ the alignment strength, and $\phi:\R^d \to \R_{\ge 0}$ a given communication kernel.

The Euler--alignment system arises as a hydrodynamic description of collective dynamics with nonlocal velocity alignment, and is closely connected to particle, kinetic, and macroscopic models of flocking and swarming. The derivation of continuum descriptions from underlying particle systems, as well as the mathematical structure of alignment dynamics at the kinetic and hydrodynamic levels, has been extensively studied in the literature; see, for instance, \cite{CC21,CCJ21, CFP25, CH24, CJ24, CK23, FP24, FK19, HL09, HT08, KMT15} and the references therein. These works include hydrodynamic and kinetic derivations for Euler--alignment type models with nonlocal or singular interactions, as well as related pressured variants. For a broader perspective on alignment models and their continuum descriptions, including particle, kinetic, and hydrodynamic viewpoints, we refer to \cite{CCTpre, CCP17, CHL17, Shv21, Shv24, Tad21} and the references therein.

The Euler--alignment system has also been widely studied from the viewpoint of qualitative properties of solutions, including global regularity, asymptotic flocking, and singularity formation. The issue of global regularity versus finite-time singularity formation has often been analyzed through the framework of critical-threshold theory. In one and two dimensions, Eulerian critical-threshold conditions for flocking hydrodynamics with nonlocal alignment were first investigated in \cite{TT14}, where subcritical regions ensuring global regularity, as well as supercritical scenarios leading to finite-time breakdown, were identified. In one dimension, this theory was later refined in \cite{CCTT16}, where a sharp pointwise critical threshold was established for the Euler--alignment case. This yields the exact dichotomy characterized by the condition
\[
u_0'(x)\ge -(\kappa \phi * \rho_0)(x).
\]
In two dimensions, finite-time blow-up on the supercritical side was obtained in \cite{TT14} under a negative divergence condition, together with additional structural assumptions on the off-diagonal components of the initial velocity gradient. A sharper Eulerian subcritical region was subsequently obtained in \cite{HT17}, formulated in terms of the divergence and the spectral gap of the symmetric part of the velocity gradient. More recently, the existence theory in arbitrary dimension was established in \cite{Tad26}, proving global smooth solutions for a class of subcritical initial data with limited initial velocity fluctuations\footnote{Here and below, we let $\nabS u$ and $\nabA u$ denote the symmetric and respectively, anti-symmetric gradients, $\nabS u:=\frac{1}{2}(\nabla u+(\nabla u)^\top\big)$,  $\nabA u:=\frac{1}{2}(\nabla u-(\nabla u)^\top\big)$, we let  $\textnormal{osc} \{u_0\}$  denote the maximal fluctuation of the initial velocity $\displaystyle\textnormal{osc} \{u_0\} :=\max_{x,y\in \supp\,\rho_0}|u_0(x)-u_0(y)|$, and $\rd^\infty_x$ denotes the diameter of spatial support of $\rho(t,\cdot)$.} $\displaystyle\textnormal{osc} \{u_0\}\leq \kappa\frac{\phi^2(\rd^\infty_x)}{8\|\nabla\phi\|_{L^\infty}}$, 
\begin{equation}\label{eq:Eulerian-CT}
\lamin(\nabS u_0) \geq -\kappa\phi(\rd^\infty_x), \quad \rd^\infty_x:=\max_t \hspace*{-0.1cm}\max_{x,y\in \supp \rho(t,\cdot)}\hspace*{-0.4cm}|x-y|.
\end{equation}
Thus, while the subcritical theory is now available in arbitrary dimension, quantitative supercritical criteria for finite-time breakdown remain much less developed. In particular, beyond two dimensions, the existing multidimensional critical-threshold theory has focused mainly on subcritical global-regularity regimes, and corresponding supercritical blow-up criteria appear to be largely absent. This gap is one of the main motivations for the present work. Related critical-threshold phenomena have also been extensively studied for Euler--Poisson and other Euler-type systems; see, for example, \cite{BLT23, CCZ16, CT08, ELT01, LT02, LT03, LTW10, TT22, TW08, WTB12}.

More recently, further developments in the one-dimensional theory addressed refined regularity criteria, cluster formation, sticky-particle dynamics, and entropic selection principles; see \cite{CCTpre, LT23, LT24}. We also mention the geometric study of mass concentration sets for pressureless Euler--alignment systems in \cite{LLST22}. These results indicate that, especially in one dimension, the fine structure of singularity formation and continuation beyond breakdown can be analyzed rather precisely.

Beyond critical-threshold theory, the Euler--alignment system also admits a well-developed existence theory. Various local and global existence results, including multidimensional well-posedness results under suitable assumptions, have been established in \cite{Cho19,CJ24,HKK14, HT17}. Long-time behavior and asymptotic flocking for Euler--alignment type models have likewise been studied in a number of works; see, for instance, \cite{CCKTpre,ST20,ST21,Tad23}.  

In this paper, we study the finite-time breakdown from a genuinely Lagrangian viewpoint. Rather than deriving closed differential inequalities for Eulerian quantities such as the divergence, the spectral gap, or the lower eigenvalue of the symmetric velocity gradient, we analyze the associated flow map and use the vanishing of its Jacobian determinant as the breakdown mechanism. More precisely, if $\eta(t,\cdot)$ denotes the associated Lagrangian flow, then
\[
J(t,x):=\det \nabla \eta(t,x)
\]
measures the local deformation of the flow. The vanishing of $J$ signals a geometric collapse of characteristics and directly leads to density concentration through
\[
\rho(t,\eta(t,x))=\frac{\rho_0(x)}{J(t,x)}.
\]
Mass concentration  is realized in the formation of Dirac mass  driven at points where  $\nabla\cdot u(t,x_*)\stackrel{t\rightarrow t_*-}{\longrightarrow} -\infty$. This provides a natural criterion for finite-time $C^1$ breakdown in the compressible regime. This geometric viewpoint is also closely related to the breakdown mechanism for pressureless gas dynamics, where the loss of invertibility of the characteristic map and the resulting density concentration play a central role; see, for instance, \cite{BCH25_2, BCH25, GS97, Pou99}.

In the current work, we derive explicit and quantitative conditions ensuring finite-time Lagrangian degeneracy. For the constant communication kernel, the alignment dynamics is simplified to damping and the Lagrangian flow admits an explicit representation, which allows us to obtain an exact pointwise criterion for the loss of local invertibility in arbitrary dimension in terms of the spectrum of the initial velocity gradient. This recovers the same critical threshold obtained by the Eulerian-based spectral dynamics \cite[\S4]{LT02}.
In the particular case of two space dimensions, this criterion can be further reformulated in closed form using the eigenvalues of the symmetric part of $\nabla u_0$ and the initial vorticity, yielding a particularly explicit description of the supercritical regime.

For general communication kernels, no explicit formula for the flow gradient is available. In this case, we decompose the flow gradient into a leading compressive part generated by the symmetric component of the initial velocity gradient and a perturbative remainder reflecting the spatial variability of the interaction kernel. This leads to quantitative, though non-sharp, sufficient conditions for finite-time Lagrangian degeneracy.   A key feature of the argument is that a sufficiently strong compressive direction in the initial data drives the Jacobian to zero before the rotational and nonlocal perturbative effects become dominant.

More precisely, our main supercritical criterion for general communication kernels can be stated as follows. It gives a quantitative finite-time degeneracy condition driven by a sufficiently negative eigenvalue of the symmetric part of the initial velocity gradient.

\begin{theorem}\label{thm:gen_phi_d}
Let $d\ge2$ and consider the Lagrange--alignment system \eqref{LA} with $\rho_0\in C^0\cap\calP(\R^d)$, $\phi\in W^{1,\infty}(\R^d;\R_{\ge0})$, and $u_0\in C^1\cap W^{1,\infty}(\R^d;\R^d)$. For $x\in\supp\,\rho_0$, let $\mu_1(x)\le\cdots\le\mu_d(x)$ be the eigenvalues of $\nabS u_0(x)$. Assume that there exist $x_*\in\supp\rho_0$, $\delta\in(0,1]$, and $t_1>0$ such that 
\[
-\mu_1(x_*) >\kappa(1+\delta)\|\phi\|_{L^\infty}, \quad \alpha_\infty(t_1) :=\frac{1-e^{-\kappa\|\phi\|_{L^\infty}t_1}} {\kappa\|\phi\|_{L^\infty}} =\frac{1+\delta}{-\mu_1(x_*)},
\]
and  the following perturbative smallness condition holds
\bq\label{eq:small_d_alpha}
\begin{split}
t_1\|\nabA u_0(x_*)\| +& \beta\lt(1+\|\nabla u_0\|_{L^\infty}\rt)t_1^2 e^{C_0t_1} <\delta,
 \end{split}
\eq
where $ \beta:=\kappa\|\nabla\phi\|_{L^\infty}\rdv$ and $C_0:=\max\{1,\beta\}$.\newline
Suppose further that
\bq\label{eq:sep_gen_d_alpha}
\min_{\alpha\in[\alpha_\infty(t_1),\,t_1]} |1+\alpha\mu_j(x_*)|\ge\delta, \quad j=1,\dots,d,
\eq
and that the number $r_*$ is odd, where
\[
 r_*:=\#\{j:1+\alpha_\infty(t_1)\mu_j(x_*)<0\}.
 \]
Then there exists $t_*=t_*(x_*)\in(0,t_1)$ such that
\[
\det\nabla\eta(t_*,x_*)=0.
\]
Consequently, the Lagrangian flow loses local invertibility in finite time along the characteristic issued from $x_*$. If in addition $\rho_0(x_*)>0$, then the density blows up along that characteristic.
\end{theorem}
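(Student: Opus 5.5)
The plan is to prove the statement entirely on the Lagrangian side, using \eqref{LA}. Write $v(t,x)=\partial_t\eta(t,x)$, so that
\[
\dot v(t,x)=\kappa\intr\phi(\eta(t,x)-\eta(t,y))\,(v(t,y)-v(t,x))\rho_0(y)\,\dy .
\]
Differentiating in $x$ gives a linear equation for $\nabla v$:
\[
\partial_t\nabla v=-\kappa a(t,x)\nabla v+\kappa\,\mathcal R(t,x),\qquad a(t,x):=\intr\phi(\eta(t,x)-\eta(t,y))\rho_0(y)\,\dy\in[0,\|\phi\|_{L^\infty}],
\]
where $\mathcal R$ collects the terms in which $\nabla\phi$ falls on $\eta(t,x)$:
\[
\mathcal R(t,x)=\intr (v(t,y)-v(t,x))\otimes\big(\nabla\eta(t,x)^\top\nabla\phi(\eta(t,x)-\eta(t,y))\big)\rho_0(y)\,\dy .
\]
Together with $\partial_t\nabla\eta=\nabla v$ and $\nabla\eta(0)=I$, Duhamel's formula gives
\[
\nabla\eta(t,x)=I+\alpha_a(t)\nabla u_0(x)+E(t,x),\qquad \alpha_a(t):=\int_0^t e^{-\kappa\int_0^s a(\tau,x)\,\rd\tau}\,\ds .
\]
Since $0\le a\le\|\phi\|_{L^\infty}$, we have $\alpha_\infty(t)\le\alpha_a(t)\le t$. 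This is exactly why \eqref{eq:sep_gen_d_alpha} is imposed on the whole interval $[\alpha_\infty(t_1),t_1]$: the precise value of $a$ along the characteristic is unknown.

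The first key step is to bound the remainder $E(t,x_*)=\kappa\int_0^t\int_0^s e^{-\kappa\int_\sigma^s a}\mathcal R(\sigma)\,\rd\sigma\,\ds$. I would use the maximum principle for velocity fluctuations in \eqref{LA}: the diameter of $\{v(t,y):y\in\supp\rho_0\}$ is nonincreasing, so $|v(t,y)-v(t,x)|\le\rdv$. Hence $\kappa\|\mathcal R(t)\|\le\beta\|\nabla\eta(t)\|$. The same representation gives $\|\nabla\eta(t)\|\le 1+t\|\nabla u_0\|_{L^\infty}+\beta\int_0^t\int_0^s\|\nabla\eta\|$, and a Gronwall argument for this double integral yields $\|\nabla\eta(t)\|\le(1+\|\nabla u_0\|_{L^\infty})e^{C_0t}$ (roughly; this is where $C_0=\max\{1,\beta\}$ enters). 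Feeding this back gives
\[
\|E(t,x_*)\|\le\beta(1+\|\nabla u_0\|_{L^\infty})\,t^2e^{C_0t}.
\]
I expect this to be the main obstacle. One must make sure the constants close exactly into \eqref{eq:small_d_alpha}, and the velocity-oscillation bound must be justified for points of the whole support, since it is used for every $y\in\supp\rho_0$.

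Next, split $\nabla u_0=\nabS u_0+\nabA u_0$ and write
\[
\nabla\eta(t,x_*)=S(t)+P(t),\qquad S(t):=I+\alpha_a(t)\nabS u_0(x_*),\qquad P(t):=\alpha_a(t)\nabA u_0(x_*)+E(t,x_*).
\]
Since $\alpha_a\le t\le t_1$, condition \eqref{eq:small_d_alpha} gives $\|P(t)\|<\delta$ for all $t\le t_1$. At $t=t_1$ we have $\alpha_a(t_1)\in[\alpha_\infty(t_1),t_1]$. By \eqref{eq:sep_gen_d_alpha}, the eigenvalues $1+\alpha\mu_j(x_*)$ of the symmetric matrix $I+\alpha\nabS u_0(x_*)$ stay at distance at least $\delta$ from zero and do not change sign on this interval. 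Therefore the number of negative eigenvalues of $S(t_1)$ equals its value at $\alpha=\alpha_\infty(t_1)$, namely $r_*$. (This is consistent with $1+\alpha_\infty(t_1)\mu_1(x_*)=-\delta<0$, which uses the relation between $\alpha_\infty(t_1)$ and $\mu_1(x_*)$.) All singular values of $S(t_1)$ are at least $\delta>\|P(t_1)\|$, so along the homotopy $s\mapsto S(t_1)+sP(t_1)$, $s\in[0,1]$, every matrix is invertible. Hence
\[
\operatorname{sign}\det\nabla\eta(t_1,x_*)=\operatorname{sign}\det S(t_1)=(-1)^{r_*}=-1 .
\]

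Finally, $\det\nabla\eta(0,x_*)=1>0$ and $t\mapsto\det\nabla\eta(t,x_*)$ is continuous, since the flow is $C^1$ while the classical solution exists. By the intermediate value theorem there is $t_*\in(0,t_1)$ with $\det\nabla\eta(t_*,x_*)=0$, taken to be the first zero. If $\rho_0(x_*)>0$, the identity $\rho(t,\eta(t,x_*))=\rho_0(x_*)/J(t,x_*)$ shows that the density blows up as $t\uparrow t_*$ along that characteristic.
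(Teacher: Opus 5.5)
Your proposal is correct and follows essentially the paper's route. The shared steps are the effective-time Duhamel decomposition $\nabla\eta(t,x_*)=I_d+\alpha(t,x_*)\nabS u_0(x_*)+E_*(t)$ with $\alpha_\infty(t)\le\alpha(t,x_*)\le t$, and the remainder bound from the velocity-oscillation maximum principle plus Gr\"onwall; your Volterra-type version closes exactly into \eqref{eq:small_d_alpha} because $\beta\le C_0^2$. They also include sign preservation of the determinant, since the diagonal part has all singular values at least $\delta>\|E_*(t_1)\|$, followed by the intermediate value theorem.

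Your homotopy $S(t_1)+sP(t_1)$ is an equivalent form of the paper's factorization $\det D\,\det(I_d+D^{-1}\widetilde E)$ with $\|D^{-1}\widetilde E\|<1$. One wording fix: continuity of $t\mapsto\det\nabla\eta(t,x_*)$ on $[0,t_1]$ should be justified by global solvability of the Lagrangian system \eqref{LA}. It cannot rest on existence of the classical Eulerian solution, which by your own conclusion does not survive up to $t_1$.
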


In two dimensions, the determinant structure allows a sharper perturbative estimate than the dimension-uniform operator-norm argument used in Theorem \ref{thm:gen_phi_d}. This refinement is stated separately in Theorem \ref{thm:2d_gen_phi}. This two-dimensional refinement is also distinct from divergence-based blow-up criteria; see Remark \ref{rem:2d_CT_blowup_comp}.

\begin{remark}[Role of the margin parameter]
The parameter $\delta>0$ measures the spectral margin of the leading symmetric deformation after it crosses the algebraic degeneracy threshold. To see this, recall that the most compressive eigenvalue is $\mu_1(x_*)<0$. For the leading symmetric part $I+\alpha \nabS u_0(x_*)$, the corresponding factor is $1+\alpha\mu_1(x_*)$. This factor vanishes at the threshold $\alpha=-\frac1{\mu_1(x_*)}$. Since, in Theorem \ref{thm:gen_phi_d}, the reference effective time is chosen so that $\alpha_\infty(t_1)=-\frac{1+\delta}{\mu_1(x_*)}$, we have $1+\alpha_\infty(t_1)\mu_1(x_*)=-\delta$. Thus $\delta$ is not a time parameter, but a spectral margin: it measures how far the leading factor lies on the negative side after crossing zero. Keeping $\delta$ in the statement gives a quantitative criterion with an adjustable margin. A smaller value of $\delta$ allows the leading compressive deformation to be closer to the degeneracy threshold, but then the skew-symmetric and nonlocal perturbative effects must be correspondingly smaller.

For a more transparent sufficient scenario, one may fix, for instance, $\delta=1$. Then $\alpha_\infty(t_1) =-\frac{2}{\mu_1(x_*)}$, which requires in particular $ -\mu_1(x_*)>2\kappa\|\phi\|_{L^\infty}$. In this case, the perturbative condition simply says that the skew-symmetric part and the nonlocal variation scale must remain smaller than this unit spectral margin. This choice is not intended to be sharp, but it makes explicit the mechanism behind the criterion: sufficiently strong compression dominates both rotation and the spatial variability of the communication kernel.
\end{remark}
 
 \begin{remark}[On the perturbative smallness condition] 
 Condition \eqref{eq:small_d_alpha} requires that $\rdv$ and $\nabA u_0(x_*)$
  are not too large, similar to the smallness scenario sought in \cite[eq. (27)]{Tad26}
  which was assumed in order to prove global existence. 
 \end{remark}
\begin{remark}[A simple explicit supercritical scenario]
The assumptions of Theorem \ref{thm:gen_phi_d} become more transparent in the minimal sign-change configuration. Suppose that, at some point $x_*\in\supp\rho_0$,
\[
\lamin(\nabS u_0(x_*))<0, \quad \mu_j\ge0\quad (j=2,\dots,d),
\]
where $\mu_1=\lamin(\nabS u_0(x_*))\le\cdots\le\mu_d$ are the eigenvalues of $\nabS u_0(x_*)$. Set
\[
L_*:=-\lamin(\nabS u_0(x_*)).
\]
Fixing $\delta=1$, the theorem applies provided $L_*$ is sufficiently large compared with the interaction scale, the skew-symmetric part, and the nonlocal remainder. More precisely, define

\[
\Lambda_{\rm sup}(x_*) := \max\lt\{ 4\kappa\|\phi\|_{L^\infty},\, 4C_0,\, 2\lt(\|\nabA u_0(x_*)\|+\sqrt{\|\nabA u_0(x_*)\|^2+4e\beta\lt(1+\|\nabla u_0\|_{L^\infty}\rt)}\rt) \rt\}.
\]
If
\[
\lamin(\nabS u_0(x_*))<-\Lambda_{\rm sup}(x_*),
\]
then all assumptions of Theorem \ref{thm:gen_phi_d} are satisfied, and hence the Lagrangian flow degenerates at $x_*$ in finite time. This condition is not intended to be sharp. It only records that, under a simple spectral configuration of $\nabS u_0(x_*)$, sufficiently strong compression in one direction dominates the skew-symmetric and nonlocal perturbative effects. 
\end{remark}

\begin{remark}[Comparison with the subcritical threshold]
The preceding criterion should be compared with the subcritical global-regularity condition \eqref{eq:Eulerian-CT}, which requires a pointwise lower bound of the form
\[
\lamin(\nabS u_0)\ge -\kappa\phi(\rd_x^\infty),
\]
under a suitable smallness assumption on the initial velocity fluctuations. Theorem \ref{thm:gen_phi_d} gives a complementary statement on the supercritical side. In the simple scenario above, finite-time Lagrangian degeneracy follows if, at some point $x_*$,
\[
\lamin(\nabS u_0(x_*))<-\Lambda_{\rm sup}(x_*),
\]
where $\Lambda_{\rm sup}(x_*)$ is an explicit quantity controlling the interaction strength, the skew-symmetric part, and the nonlocal remainder.

Thus the comparison is not a sharp converse to \eqref{eq:Eulerian-CT}. There is a gap between the subcritical lower bound and our sufficient blow-up threshold, as expected from the perturbative nature of the argument. Nevertheless, the result gives a quantitative blow-up criterion on the opposite side of the known subcritical regime and therefore complements the global existence theory for subcritical initial data.
\end{remark}
 
The Lagrangian viewpoint offers an alternative formulation of the critical-threshold mechanism, especially in the constant-kernel case where it recovers the Eulerian spectral criterion of \cite{LT02}. Rather than tracking the spectral dynamics associated with Eulerian quantities such as the divergence, spectral gap, or $\lamin(\nabS u)$, we focus on the geometric mechanism of breakdown through the loss of local invertibility of the Lagrangian flow. In particular, while the multidimensional theory beyond two dimensions has so far concentrated mainly on subcritical global regularity and related qualitative properties, our results reveal a higher-dimensional supercritical mechanism for finite-time Lagrangian degeneracy. We also note that our two-dimensional analysis is consistent with the general observation that rotation may inhibit singularity formation: in our framework, the antisymmetric part of the initial velocity gradient acts against the formation of a real compressive eigenvalue and hence against the loss of local invertibility of the flow; see also \cite{HT17,LT04}.

The rest of this paper is organized as follows. In Section \ref{sec:Lag}, we present the Lagrangian formulation of the Euler--alignment system and relate the loss of regularity to the degeneration of the flow Jacobian. Section \ref{sec:const} is devoted to the constant communication kernel case. We derive an explicit representation of the flow, prove an exact pointwise breakdown criterion in arbitrary dimension, analyze the corresponding subcritical regime, and obtain a sharp closed-form characterization in two dimensions together with higher-dimensional sufficient supercritical conditions. In Section \ref{sec:non_const}, we study the non-constant communication kernel case. We first establish an effective-time decomposition of the flow gradient and bounds on the perturbative remainder, and then use this framework to derive sufficient conditions for finite-time Lagrangian degeneracy in two and higher dimensions.
 
%
%
%
%
%
 \section{Lagrangian formulation}\label{sec:Lag}
Assume that \eqref{EA} admits sufficiently regular solutions so that the associated Lagrangian flow map is well defined. Let $\rho_0\in C^0\cap\calP(\R^d)$ denote the initial density. We introduce the Lagrangian--alignment system
\begin{align}\label{LA}
\begin{aligned}
&\pa_t \eta(t,x) = u(t,\eta(t,x)) =: v(t,x), \quad t>0, \ x \in \supp(\rho_0), \cr
&\pa_t v(t,x)  = \kappa  \intr \phi(\eta(t,x)-\eta(t,y)) (v(t,y) - v(t,x))\,\rho_0(\dy),
\end{aligned}
\end{align}
subject to the initial condition
\[
(\eta(0,x), v(0,x)) = (x, u_0(x)), \quad x \in \supp(\rho_0).
\]

The regularity of Eulerian solutions is closely related to the invertibility of the flow map $x\mapsto \eta(t,x)$. Differentiating \eqref{LA} with respect to the Lagrangian variable $x$, we obtain 
\[
\pa_t \nabla \eta(t,x) = (\nabla u)(t,\eta(t,x))  \nabla \eta(t,x).
\]

Let
\[
J(t,x):=\det\nabla\eta(t,x)
\]
denote the Jacobian determinant of the flow. By Jacobi's formula, $J$ satisfies
\[
\pa_t J(t,x) = (\nabla \cdot u)(t,\eta(t,x)) J(t,x), \quad J(0,x) = 1,
\]
and Gr\"onwall's lemma gives
\[
J(t,x) = \exp\lt(\int_0^t  (\nabla \cdot u)(s,\eta(s,x))\,\ds\rt).
\]
In particular, since $|\nabla\cdot u|\le \|\nabla u\|_{L^\infty}$, we obtain the
quantitative lower bound
\bq\label{Jac_lower}
J(t,x)\ge \exp \lt(-\int_0^t \|\nabla u(s)\|_{L^\infty(\R^d)}\,\ds\rt), \quad t\ge0.
\eq

Thus, as long as $\nabla u\in L^1(0,T;L^\infty)$, the flow map remains a $C^1$-diffeomorphism
and $J(t,x)>0$ on $[0,T)\times\R^d$. In particular, in the incompressible case
($\nabla\cdot u=0$), one has $J\equiv1$, and the flow never degenerates.

For the compressible Euler--alignment system, the possible vanishing of $J$ provides a natural Lagrangian breakdown criterion. Indeed, if $J(t_*,x_*)=0$ for some $(t_*,x_*)$, then \eqref{Jac_lower} implies
\[
\int_0^{t_*}\|\nabla u(s)\|_{L^\infty(\R^d)}\,\ds = +\infty,
\]
so the  BKM-like continuation criterion  \cite{BKM84} associated with classical $W^{1,\infty}$ regularity class  necessarily fails at or before $t_*$. Moreover, along characteristics, the continuity equation gives
\[
\rho(t,\eta(t,x))=\frac{\rho_0(x)}{J(t,x)},
\]
and therefore, whenever $\rho_0(x_*)>0$, the degeneracy $J(t_*,x_*)=0$ yields
\[
\lim_{t\to t_*-}\rho(t,\eta(t,x_*))=+\infty.
\]

Collecting the above observations, we obtain the following consequence.

\begin{proposition} 
Let $(\rho,u)$ be a classical solution to the Euler--alignment system \eqref{EA} on $[0,T]$ such that
\[
\rho\in C([0,T];\calP \cap L^\infty(\R^d)), \quad u\in C([0,T];\dot W^{1,\infty}(\R^d)).
\]
Let $\eta$ be the associated flow and $J(t,x)=\det\nabla\eta(t,x)$. Then
\[
J(t,x)>0 \quad \text{for all } t\in[0,T],\ x\in\supp(\rho_0).
\]
Consequently, if $J(t_*,x_*)=0$ for some $x_*\in\supp(\rho_0)$, then the solution cannot belong to the above class up to time $t_*$, and in particular the lifespan in this class is finite.
\end{proposition}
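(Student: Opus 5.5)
The statement is the final Proposition of Section \ref{sec:Lag}. Its proof only needs to put together the observations made just before it. I will argue in two steps: first the positivity of $J$ on $[0,T]$, then the contrapositive statement about degeneracy.

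\emph{Step 1: the flow is well defined and $C^1$.} Since $u\in C([0,T];\dot W^{1,\infty}(\R^d))$, the field $u(t,\cdot)$ is Lipschitz for each $t$, with a Lipschitz constant $\|\nabla u(t)\|_{L^\infty}$ that is bounded uniformly on $[0,T]$. By compactness of $[0,T]$ and continuity in time, this bound is a finite constant $M:=\sup_{t\in[0,T]}\|\nabla u(t)\|_{L^\infty}$. Cauchy--Lipschitz then gives a unique global flow $\eta(t,\cdot)$ on $[0,T]$. Because the solution is classical, $\nabla\eta$ satisfies the linearized equation $\pa_t\nabla\eta=(\nabla u)(t,\eta)\nabla\eta$ with $\nabla\eta(0,x)=I$. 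Jacobi's formula then yields $\pa_tJ=(\nabla\cdot u)(t,\eta)J$. If the solution is only Lipschitz in space, I would obtain this by mollifying $u$ and passing to the limit. I expect this regularity point to be the only real subtlety, and the ``classical'' hypothesis lets us sidestep it.

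\emph{Step 2: the lower bound.} Solving the linear ODE gives $J(t,x)=\exp\big(\int_0^t(\nabla\cdot u)(s,\eta(s,x))\,\ds\big)$. Using $|\nabla\cdot u|\le \sqrt d\,|\nabla u|$, the estimate \eqref{Jac_lower} holds, up to a dimensional constant that is irrelevant here. This gives $J(t,x)\ge e^{-\sqrt d\,Mt}\ge e^{-\sqrt d\,MT}>0$ for all $t\in[0,T]$ and $x\in\supp(\rho_0)$, indeed for all $x\in\R^d$. That is the first claim.

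\emph{Step 3: the contrapositive.} Suppose $J(t_*,x_*)=0$ and the solution belonged to the class on $[0,t_*]$. Applying Step 2 with $T=t_*$ would give $J(t_*,x_*)>0$, a contradiction. Hence no solution in this class extends up to $t_*$, so the lifespan in the class is at most $t_*<\infty$. If in addition $\rho_0(x_*)>0$, then $\rho(t,\eta(t,x_*))=\rho_0(x_*)/J(t,x_*)\to\infty$ as $t\to t_*^-$. This comes from integrating the continuity equation along characteristics, $\frac{\rd}{\dt}\big(\rho(t,\eta)J\big)=0$, and it confirms that the failure is incompatible with $\rho\in L^\infty$.
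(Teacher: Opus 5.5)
Your proof is correct and follows essentially the same route as the paper. Jacobi's formula gives $J(t,x)=\exp\big(\int_0^t(\nabla\cdot u)(s,\eta(s,x))\,\ds\big)$, the boundedness of $\|\nabla u(t)\|_{L^\infty}$ on $[0,T]$ yields the positive lower bound \eqref{Jac_lower}, and the degeneracy statement is its contrapositive; your added care about the existence of the flow and the dimensional constant in $|\nabla\cdot u|\lesssim|\nabla u|$ ($\sqrt d$ for the Frobenius norm, $d$ for the operator norm) only fills in details the paper leaves implicit.
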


\begin{proof}
Since $u\in C([0,T];\dot W^{1,\infty}(\R^d))$, we have $\nabla u\in L^1(0,T;L^\infty(\R^d))$, and thus \eqref{Jac_lower} implies $J(t,x)>0$ for all $t\in[0,T]$ and $x\in\supp(\rho_0)$. If $J(t_*,x_*)=0$, then necessarily $\int_0^{t_*}\|\nabla u(s)\|_{L^\infty}\,\ds=+\infty$, which is incompatible with the assumed regularity.
\end{proof}

%
%
%
%
%
\section{Constant communication kernel}\label{sec:const}

We begin with the constant communication kernel $\phi\equiv1$, which serves as a model case where the Lagrangian dynamics can be solved explicitly. This allows us to identify the breakdown mechanism in a sharp and transparent way, and also provides a useful benchmark for the non-constant kernel analysis developed later.  In this case, the Lagrangian system \eqref{LA} reduces to
 \begin{align}\label{LA:const}
\begin{aligned}
&\pa_t \eta(t,x) = v(t,x), \quad t>0, \ x \in \supp(\rho_0), \cr
&\pa_t v(t,x)  = \kappa \intr   (v(t,y) - v(t,x))\,\rho_0(\dy),
\end{aligned}
\end{align}
subject to the initial condition
\[
(\eta(0,x), v(0,x)) = (x, u_0(x)), \quad x \in \supp(\rho_0).
\]

%
%
%
%
%
\subsection{Explicit $C^1$ breakdown and regularity with constant kernels} 

For the constant communication kernel $\phi\equiv1$, the Lagrangian system admits an explicit solution. As a consequence, one can characterize precisely when the flow loses local invertibility and, conversely, when uniform regularity persists.

Since the total momentum is conserved, we obtain
\[
\intr v(t,x)\,\rho_0(\dx) = \intr u_0(x)\,\rho_0(\dx) =: \bar v_0,
\]
and thus $v$ satisfies
\[
\pa_t v(t,x) = \kappa (\bar v_0 - v).
\]
Then, by solving the momentum equation, we get
\[
v(t,x) = \bar v_0 + (u_0(x)  - \bar v_0) e^{-\kappa t},
\]
and subsequently, 
\[
\eta(t,x) = x + \int_0^t v(s,x)\,\ds = x + \bar v_0 t + \alpha(t) (u_0(x)  - \bar v_0), \quad \alpha(t) := \frac{1 - e^{-\kappa t}}{\kappa}.
\]
This implies
\bq\label{expl_con}
\nabla \eta(t,x) = I_d + \alpha(t) \nabla u_0(x) \quad \text{and} \quad \det \nabla \eta(t,x) = \det \lt( I_d + \alpha(t) \nabla u_0(x)\rt).
\eq

The next proposition gives an exact criterion for the loss of local invertibility of the Lagrangian flow.

\begin{proposition} \label{prop:const-break}
Consider the system \eqref{LA:const}. Assume $\rho_0\in C^0\cap\calP(\R^d)$,  $u_0\in C^1 \cap W^{1,\infty}(\R^d;\R^d)$ and fix a point $x_*\in\supp(\rho_0)$.
The following statements are equivalent:
\begin{enumerate}
\item[\rm (i)] There exists $t>0$ such that $\det\nabla\eta(t,x_*)=0$.
\item[\rm (ii)] There exists $\alpha\in(0,1/\kappa)$ such that
\[
\det\big(I_d+\alpha\nabla u_0(x_*)\big)=0.
\]
\item[\rm (iii)] The matrix $\nabla u_0(x_*)$ has a real eigenvalue $\lambda_*<-\kappa$.
\end{enumerate}

Moreover, assume that $\nabla u_0(x_*)$ has at least one real eigenvalue below $-\kappa$ and set
\[
\lambda_{\min}(x_*):=\min\lt\{\lambda\in\sigma(\nabla u_0(x_*))\cap\R: \lambda<-\kappa\rt\}.
\]
Then the first breakdown time at $x_*$ is
\[
t_*(x_*)
= -\frac1\kappa \log\lt(1+ \frac{\kappa}{\lambda_{\min}(x_*)}\rt)\in(0,\infty),
\]
that is, $\alpha(t_*)=-1/\lambda_{\min}(x_*)$ and hence $\det\nabla\eta(t_*,x_*)=0$.
\end{proposition}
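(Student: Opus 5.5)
The proof rests on the explicit formula \eqref{expl_con}: $\det\nabla\eta(t,x_*)=\det(I_d+\alpha(t)\nabla u_0(x_*))$, where $\alpha(t)=(1-e^{-\kappa t})/\kappa$. The map $t\mapsto\alpha(t)$ is a strictly increasing bijection from $(0,\infty)$ onto $(0,1/\kappa)$, since $\alpha'(t)=e^{-\kappa t}>0$, $\alpha(0)=0$ and $\alpha(t)\to1/\kappa$. So (i)$\Leftrightarrow$(ii) is just the change of variables $\alpha=\alpha(t)$. The one caveat is that this needs the explicit formula to be valid for all $t>0$. It is, because the Lagrangian system \eqref{LA:const} is solved globally in closed form: $v$ and $\eta$ are defined for every $t$, independently of whether $\nabla\eta$ degenerates.

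For (ii)$\Leftrightarrow$(iii), factor with $\alpha>0$:
\[
\det(I_d+\alpha\nabla u_0(x_*))=\alpha^d\det\bigl(\nabla u_0(x_*)+\alpha^{-1}I_d\bigr)=\alpha^d\det\bigl(\nabla u_0(x_*)-\lambda I_d\bigr)\Big|_{\lambda=-1/\alpha},
\]
up to the sign $(-1)^d$. Hence the determinant vanishes exactly when $\lambda=-1/\alpha$ is an eigenvalue of $\nabla u_0(x_*)$. This $\lambda$ is automatically real. As $\alpha$ ranges over $(0,1/\kappa)$, $\lambda=-1/\alpha$ ranges exactly over $(-\infty,-\kappa)$. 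So (ii) holds iff $\nabla u_0(x_*)$ has a real eigenvalue $\lambda_*<-\kappa$, which is (iii).

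For the first breakdown time, the times $t>0$ with $\det\nabla\eta(t,x_*)=0$ correspond bijectively, via $\alpha(t)=-1/\lambda$, to the real eigenvalues $\lambda<-\kappa$. There are finitely many such eigenvalues, so the set of degeneracy times is finite and nonempty. Since $\alpha$ is increasing, the smallest time corresponds to the smallest $\alpha$, i.e. to the eigenvalue $\lambda$ for which $-1/\lambda$ is smallest.

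This is the one point needing care. On $(-\infty,-\kappa)$, the map $\lambda\mapsto-1/\lambda$ is increasing in $\lambda$: for example, $-1/(-10)=0.1<-1/(-2)=0.5$. So the smallest $\alpha$ comes from the most negative eigenvalue, and $\lambda_{\min}(x_*)$ as defined gives the first time. Solving $1-e^{-\kappa t_*}=-\kappa/\lambda_{\min}$ gives $e^{-\kappa t_*}=1+\kappa/\lambda_{\min}\in(0,1)$, since $\lambda_{\min}<-\kappa$. Therefore
\[
t_*(x_*)=-\frac1\kappa\log\bigl(1+\kappa/\lambda_{\min}(x_*)\bigr)\in(0,\infty).
\]
For $t<t_*$, no eigenvalue $\lambda<-\kappa$ satisfies $-1/\lambda\le\alpha(t)$, so $\det\nabla\eta(t,x_*)\neq0$. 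Combined with $\det\nabla\eta(0,x_*)=1$ and continuity in $t$, the determinant stays positive before $t_*$.

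No step is a genuine obstacle. The only things to get right are the monotonicity direction when identifying the first time and the observation that the explicit solution exists globally.
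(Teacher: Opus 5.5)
Your proposal is correct and follows the paper's proof essentially step for step: the change of variables $\alpha=\alpha(t)$ gives (i)$\Leftrightarrow$(ii), the identification of $-1/\alpha$ as a real eigenvalue gives (ii)$\Leftrightarrow$(iii), and the monotonicity of $\alpha(t)$ together with that of $\lambda\mapsto-1/\lambda$ singles out $\lambda_{\min}$ as giving the first breakdown time. Your extra remarks on the global validity of the explicit solution and on the positivity of $J$ before $t_*$ fill in details the paper leaves implicit. The caveat ``up to the sign $(-1)^d$'' is unnecessary, since $\det(I_d+\alpha M)=\alpha^d\det(M+\alpha^{-1}I_d)$ holds exactly.
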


\begin{proof}
For the constant communication weight $\phi\equiv1$, the explicit formula
\eqref{expl_con} gives
\[
\nabla\eta(t,x_*)=I_d+\alpha(t)\nabla u_0(x_*), \quad \alpha(t)=\frac{1-e^{-\kappa t}}{\kappa}.
\]
The function $\alpha(t)$ is strictly increasing on $(0,\infty)$ and satisfies $\alpha(t)\in(0,1/\kappa)$.
Consequently, the condition $\det\nabla\eta(t,x_*)=0$ for some $t>0$ is equivalent to the existence of a parameter $\alpha\in(0,1/\kappa)$ such that
\[
\det\lt(I_d+\alpha\nabla u_0(x_*)\rt)=0.
\]
This shows the equivalence between {\rm(i)} and {\rm(ii)}.

Set $M_*:=\nabla u_0(x_*)$. The singularity of the matrix $I_d+\alpha M_*$ means that there exists a nonzero vector $\xi\in\R^d$ such that
\[
(I_d+\alpha M_*)\xi=0. 
\]
Equivalently, $\xi$ is an eigenvector of $M_*$ associated with the eigenvalue $-1/\alpha$. Since $\alpha>0$, this eigenvalue is real and negative. Therefore, condition {\rm(ii)} holds for some $\alpha\in(0,1/\kappa)$ if and only if $M_*$ admits a real eigenvalue $\lambda_*=-1/\alpha$ satisfying
\[
\alpha\in(0,1/\kappa) \quad \Longleftrightarrow \quad \lambda_*<-\kappa.
\]
This establishes the equivalence between {\rm(ii)} and {\rm(iii)}.

Assume now that $\nabla u_0(x_*)$ has at least one real eigenvalue $\lambda<-\kappa$, and denote by
\[
\lambda_{\min}=\min\lt\{\lambda\in\sigma(\nabla u_0(x_*))\cap\R : \lambda<-\kappa\rt\}.
\]
Define $\alpha_*:=-1/\lambda_{\min}\in(0,1/\kappa)$. Since the function $\alpha(t)$ is strictly increasing on $(0,\infty)$, there exists a unique $t_*>0$ such that
\[
\alpha(t_*)=\alpha_*.
\]
Solving this relation yields
\[
e^{-\kappa t_*}=1+\frac{\kappa}{\lambda_{\min}}\in(0,1),
\quad
t_*=-\frac1\kappa\log\lt(1+\frac{\kappa}{\lambda_{\min}}\rt).
\]
At this time, the matrix $I_d+\alpha(t_*)\nabla u_0(x_*)$ is singular, and hence
\[
\det\nabla\eta(t_*,x_*)=0.
\]
Moreover, if $\nabla u_0(x_*)$ admits several real eigenvalues below $-\kappa$, then each such eigenvalue
$\lambda$ generates a potential singular time $t_\lambda$ via $\alpha(t_\lambda)=-1/\lambda$, but the
first breakdown occurs at $t_*$ corresponding to $\lambda_{\min}$, since $\alpha(t)$ is strictly increasing.
\end{proof}

\begin{remark}[Eulerian formulation]
Proposition \ref{prop:const-break} shows that in the case of constant kernel $\phi\equiv1$, finite-time degeneration of the Lagrangian flow at a point $x_*$ can occur if and only if $\nabla u_0(x_*)$ admits a real eigenvalue below $-\kappa$. Equivalently, if
\begin{equation}\label{eq:sigma-below-kappa}
\sigma(\nabla u_0(x_*))\cap(-\infty,-\kappa)=\emptyset,
\end{equation}
then
\[
J(t,x_*)>0 \quad \text{for all } t\ge0.
\]
This coincides with the critical threshold condition derived from the  Eulerian 
dynamics, \eqref{EA}${}_2$ with $\phi\equiv1$,
\[
u_t+u\cdot\nabla_x u= \kappa(\overline{v}_0 - u),
\]
where we used the conservation of mass $\int \rho(t,\cdot)\,\rd x = 1$ and of momentum $\overline{v}_0=\int (\rho u)(t,\cdot)\,\rd x$.  Indeed, arguing along the lines of \cite[Lemma 4.1]{LT02}, it follows that the velocity gradient, $M:=\nabla u$, satisfies the matrix Ricatti equation
\[
M'+M^2 =   -\kappa M,  \quad \square':=(\partial_t +u\cdot\nabla_x)\square, 
\]
which in turn yields that  the eigenvalues, $\lambda=\lambda(M)$, satisfy the scalar Ricatti equation 
$\lambda' + \lambda^2 = -\kappa\lambda$,
whose solution is given by 
\begin{equation}\label{eq:lambda-of-M}
\lambda(t) =\frac{\lambda_0(x)e^{-\kappa t}}{1+\lambda_0(x)\kappa^{-1}(1-e^{-\kappa t})}.
\end{equation}
This solution along particle-path remains bounded from below for all time if and only if 
\[ 
\lambda_0(x) \ \text{is either complex or} \  
 \inf_{\alpha \in {\mathbb R}^d}\lambda_0(x) \geq -\kappa,
\] 
  that is, if and only if \eqref{eq:sigma-below-kappa} holds.
\end{remark}

We next refine Proposition \ref{prop:const-break} by quantifying the rate at which the Jacobian $J(t,x_*)$ vanishes as $t \to t_*-$, and hence the associated density blow-up along the characteristic issued from $x_*$.

\begin{proposition} \label{prop:const-rate}
Consider the system \eqref{LA:const}. Assume $\rho_0\in C^0\cap\calP(\R^d)$,  $u_0\in C^1\cap W^{1,\infty}(\R^d;\R^d)$ and fix $x_*\in\supp(\rho_0)$. Suppose that the first breakdown time at $x_*$ is $t_*\in(0,\infty)$, i.e.
\[
J(t,x_*)>0\ \text{for }0\le t<t_*, \quad J(t_*,x_*)=0.
\]
Let $\lambda_*:=-1/\alpha(t_*)$ be an eigenvalue of $\nabla u_0(x_*)$ with algebraic multiplicity $m\ge1$, where $\alpha(t)$ is defined as in \eqref{expl_con}. Then there exist constants $c_1,c_2>0$ and $\delta>0$ such that, for all $t\in(t_*-\delta,t_*)$,
\[
c_1 (t_*-t)^m \le J(t,x_*) \le c_2 (t_*-t)^m .
\]
In particular, if $\rho_0(x_*)>0$, then there exist $C_1,C_2>0$ such that
\[
C_1 (t_*-t)^{-m}\le \rho(t,\eta(t,x_*))\le C_2 (t_*-t)^{-m}
\]
for all $t$ sufficiently close to $t_*$.
Moreover,
\[
\int_0^t (\nabla\cdot u)(s,\eta(s,x_*))\,\ds
=
m\log(t_*-t)+O(1)
\quad (t\to t_*-).
\]
\end{proposition}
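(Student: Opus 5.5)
The plan is to use the explicit formula \eqref{expl_con}. With $M_*:=\nabla u_0(x_*)$, it gives $J(t,x_*)=\det(I_d+\alpha(t)M_*)=\prod_{k=1}^d(1+\alpha(t)\lambda_k)$, where $\lambda_1,\dots,\lambda_d$ are the eigenvalues of $M_*$ (complex ones in conjugate pairs), counted with algebraic multiplicity. The argument has four steps.

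\textbf{Step 1: isolate the vanishing factor.} The factors with $\lambda_k=\lambda_*$ give $(1+\alpha(t)\lambda_*)^m$. Since $\alpha(t_*)=-1/\lambda_*$, this equals
\[
(1+\alpha(t)\lambda_*)^m=\bigl(\lambda_*(\alpha(t)-\alpha(t_*))\bigr)^m=|\lambda_*|^m\,(\alpha(t_*)-\alpha(t))^m,
\]
where we used $\lambda_*<0$ and $\alpha(t)<\alpha(t_*)$ for $t<t_*$. Let $P(t)$ be the product of the remaining factors. At $t=t_*$ each remaining factor is $1+\alpha(t_*)\lambda_k\neq0$, because $\lambda_k\neq\lambda_*=-1/\alpha(t_*)$. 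Hence $P$ is continuous and $P(t_*)\ne0$.

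\textbf{Step 2: fix the sign of $P$ (the only delicate point).} Complex conjugate pairs contribute $|1+\alpha\lambda_k|^2>0$, so $P$ is real. For $t<t_*$ we have $J(t,x_*)>0$ by the first-breakdown assumption, and the $\lambda_*$-factor is positive, so $P(t)>0$ there. Continuity then gives $P(t_*)>0$. Choose $\delta>0$ so that $\tfrac12P(t_*)\le P(t)\le 2P(t_*)$ on $(t_*-\delta,t_*)$.

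\textbf{Step 3: linearize $\alpha$ near $t_*$.} Since $\alpha'(t)=e^{-\kappa t}$ lies in $[e^{-\kappa t_*},1]$ on $[0,t_*]$, the mean value theorem gives
\[
e^{-\kappa t_*}(t_*-t)\le\alpha(t_*)-\alpha(t)\le t_*-t.
\]
Combining Steps 1--3 yields the bounds $c_1(t_*-t)^m\le J(t,x_*)\le c_2(t_*-t)^m$ with
\[
c_1=\tfrac12P(t_*)|\lambda_*|^me^{-m\kappa t_*},\qquad c_2=2P(t_*)|\lambda_*|^m.
\]

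\textbf{Step 4: density and divergence.} The density bounds follow from $\rho(t,\eta(t,x_*))=\rho_0(x_*)/J(t,x_*)$ with $C_i=\rho_0(x_*)/c_{3-i}$. For the last claim, take logarithms in $J(t,x_*)=\exp\bigl(\int_0^t(\nabla\cdot u)(s,\eta(s,x_*))\,\ds\bigr)$ from Section \ref{sec:Lag}. This is valid for $t<t_*$, where $J>0$ and the solution is classical. The two-sided bounds then give
\[
\int_0^t(\nabla\cdot u)(s,\eta(s,x_*))\,\ds=m\log(t_*-t)+O(1),
\]
with $|O(1)|\le\max(|\log c_1|,|\log c_2|)$.
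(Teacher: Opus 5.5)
Your proposal is correct and takes essentially the same route as the paper. Both arguments factor $\det(I_d+\alpha(t)\nabla u_0(x_*))$ as $(1+\alpha(t)\lambda_*)^m$ times a cofactor that does not vanish at $t_*$, use the mean value theorem to compare $\alpha(t_*)-\alpha(t)$ with $t_*-t$, and read off the density and logarithmic statements from $\rho=\rho_0/J$ and Jacobi's formula. The differences are only cosmetic: you fix the sign of the cofactor ($P(t_*)>0$, using $J>0$ before $t_*$) where the paper works with $|g|$ and $J=|f(\alpha(t))|$, and your bounds $e^{-\kappa t_*}(t_*-t)\le\alpha(t_*)-\alpha(t)\le t_*-t$ hold on all of $[0,t_*]$ rather than only near $t_*$.
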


\begin{proof}
By \eqref{expl_con}, we have
\[
J(t,x_*)=\det\nabla\eta(t,x_*)=\det(I_d+\alpha(t)M_*)=:f(\alpha(t)), \quad M_*:=\nabla u_0(x_*).
\]
Let $\lambda_1,\dots,\lambda_d$ be the eigenvalues of $M_*$ counted with algebraic multiplicity.
Then
\[
f(\alpha)=\prod_{j=1}^d(1+\alpha\lambda_j).
\]
Set $\alpha_*:=\alpha(t_*)$, so that $f(\alpha_*)=0$ and $\lambda_*=-1/\alpha_*$.
Since $\lambda_*$ has algebraic multiplicity $m$, after relabeling we may assume
\[
\lambda_1=\cdots=\lambda_m=\lambda_*,
\quad
\lambda_{m+1},\dots,\lambda_d\neq \lambda_*.
\]
Thus we can factor
\[
f(\alpha)=(1+\alpha\lambda_*)^m\,g(\alpha),
\quad
g(\alpha):=\prod_{j=m+1}^d(1+\alpha\lambda_j).
\]
Since $1+\alpha_*\lambda_j\neq0$ for $j\ge m+1$, we have $g(\alpha_*)\neq0$, and by continuity there exist
$0<c_-\le c_+$ and $\delta_1>0$ such that
\[
c_-\le |g(\alpha)|\le c_+ \quad \text{whenever }|\alpha-\alpha_*|<\delta_1.
\]
Using $1+\alpha\lambda_*=\lambda_*(\alpha-\alpha_*)$, we obtain for $|\alpha-\alpha_*|<\delta_1$,
\[
|f(\alpha)|=|\lambda_*|^m\,|\alpha-\alpha_*|^m\,|g(\alpha)|.
\]
Hence, there exist $C_-,C_+>0$ such that
\[
C_-|\alpha-\alpha_*|^m\le |f(\alpha)|\le C_+|\alpha-\alpha_*|^m,
\quad |\alpha-\alpha_*|<\delta_1.
\]

Next, since $\alpha'(t)=e^{-\kappa t}>0$, the mean value theorem yields that for each $t<t_*$ there exists $\tau_t\in(t,t_*)$ such that
\[
\alpha_*-\alpha(t)=\alpha'(\tau_t)(t_*-t)=e^{-\kappa\tau_t}(t_*-t).
\]
Choosing $\delta_0>0$ sufficiently small so that $\tau_t\in(t_*-\delta_0,t_*)$ whenever $t\in(t_*-\delta_0,t_*)$, we get
\[
\frac12 e^{-\kappa t_*}(t_*-t)\le \alpha_*-\alpha(t)\le 2 e^{-\kappa t_*}(t_*-t)
\quad (t\in(t_*-\delta_0,t_*)).
\]

Since $t_*$ is the first breakdown time, $J(t,x_*)>0$ for $t<t_*$, and thus for $t$ close enough to $t_*$,
\[
J(t,x_*)=f(\alpha(t))=|f(\alpha(t))|.
\]
Combining the previous two displays gives the two-sided bound
\[
c_1 (t_*-t)^m \le J(t,x_*) \le c_2 (t_*-t)^m
\]
for all $t\in(t_*-\delta,t_*)$, with $\delta:=\min\{\delta_0,\delta_2\}$ and $\delta_2>0$ chosen so that
$|\alpha(t)-\alpha_*|<\delta_1$.

The density estimate follows immediately from
\[
\rho(t,\eta(t,x_*))=\frac{\rho_0(x_*)}{J(t,x_*)}.
\]
The logarithmic asymptotic then follows from Jacobi's formula
\[
J(t,x_*)=\exp\lt(\int_0^t (\nabla\cdot u)(s,\eta(s,x_*))\,\ds\rt).
\]
This completes the proof.
\end{proof}

\begin{remark}[Divergence blow-up]
Proposition \ref{prop:const-rate} yields
\[
\log J(t,x_*) = m\log(t_*-t)+O(1)\quad (t\to t_*-),
\]
and hence
\[
\int_0^t (\nabla\cdot u)(s,\eta(s,x_*))\,\ds \to -\infty
\quad (t\to t_*-).
\]
Although this does not by itself imply the full pointwise limit
\[
(\nabla\cdot u)(t,\eta(t,x_*))\to -\infty,
\]
it does show, by a mean-value argument on shrinking time intervals, that there exists a sequence $t_n\uparrow t_*$ such that
\[
(\nabla\cdot u)(t_n,\eta(t_n,x_*))\to -\infty.
\]
Equivalently,
\[
\liminf_{t\to t_*-}(\nabla\cdot u)(t,\eta(t,x_*))=-\infty.
\]
In the constant-kernel case this blow-up is necessarily compressive. Indeed, along a characteristic before the first degeneracy time, the eigenvalues of $\nabla u$ are explicitly given by \eqref{eq:lambda-of-M}; the positive real eigenvalues remain uniformly bounded from above, while the singularity can only occur through a real eigenvalue tending to $-\infty$. Thus the loss of regularity detected above corresponds to compression rather than expansion.
\end{remark}

We conclude this subsection by providing a complementary fact: as long as the Jacobian remains uniformly positive on $[0,T]\times K$, the flow remains uniformly invertible on $K$ and  the corresponding pulled-back Eulerian quantities remain uniformly bounded. If, in addition, the flow is injective on $K$, these bounds yield genuine Eulerian $W^{1,\infty}$ bounds on $\eta(t,K)$.

\begin{proposition} \label{prop:subcri_const_d}
Let $d\ge2$ and consider the system \eqref{LA:const}.
Assume $u_0\in C^1\cap W^{1,\infty}(\R^d;\R^d)$ and $\rho_0\in C_b\cap\calP(\R^d)$.
Set $K:=\supp\rho_0$. Fix $T>0$ and assume that  
\bq\label{eq:det_lower_d}
\inf_{(t,x)\in[0,T]\times K}\det\nabla\eta(t,x)=:c_T >0.
\eq
Then the following statements hold.

\smallskip\noindent
\rm (i) For every $t\in[0,T]$ and $x\in K$, the matrix $\nabla\eta(t,x)$ is invertible, and there exists a constant $C_T>0$ such that
\[
\sup_{(t,x)\in[0,T]\times K}\|(\nabla\eta(t,x))^{-1}\|\le C_T.
\]

\noindent
\rm (ii) The pulled-back density, velocity, and Eulerian velocity gradient satisfy
\[
\sup_{0\le t\le T} \left\|\frac{\rho_0}{J(t,\cdot)}\right\|_{L^\infty(K)} + \sup_{0\le t\le T} \|v(t,\cdot)\|_{L^\infty(K)} + \sup_{0\le t\le T} \|\nabla v(t,\cdot)(\nabla\eta(t,\cdot))^{-1}\|_{L^\infty(K)} \le C,
\]
for some constant $C>0$ depending only on $c_T$, $T$, $\kappa$, $\|u_0\|_{W^{1,\infty}}$, and $\|\rho_0\|_{L^\infty}$.

\smallskip\noindent
\rm (iii) If, in addition, $\eta(t,\cdot)|_K$ is injective for every $t\in[0,T]$, then $\eta(t,\cdot)|_K$ is a $C^1$-diffeomorphism from $K$ onto $\eta(t,K)$ for every $t\in[0,T]$.  Moreover, the corresponding Eulerian solution satisfies
\[
\sup_{0\le t\le T}\|\rho(t)\|_{L^\infty(\eta(t,K))}+\sup_{0\le t\le T}\|u(t)\|_{W^{1,\infty}(\eta(t,K))}\le C,
\]
for some constant $C>0$ depending only on $c_T$, $T$, $\kappa$, $\|u_0\|_{W^{1,\infty}}$, and $\|\rho_0\|_{L^\infty\cap\calP}$.
\end{proposition}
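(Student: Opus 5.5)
The plan is to exploit the explicit formula \eqref{expl_con}, $\nabla\eta(t,x)=I_d+\alpha(t)\nabla u_0(x)$ with $\alpha(t)=(1-e^{-\kappa t})/\kappa\in[0,1/\kappa)$. Since $\nabla\eta$ ranges over a bounded set, a lower bound on the determinant controls the inverse through the cofactor formula.

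For (i), write $(\nabla\eta)^{-1}=\cof(\nabla\eta)^\top/\det\nabla\eta$. Each entry of the cofactor matrix is a polynomial of degree $d-1$ in the entries of $\nabla\eta$. Moreover $\|\nabla\eta(t,x)\|\le 1+\kappa^{-1}\|\nabla u_0\|_{L^\infty}$ uniformly in $(t,x)$. Hence
\[
\|(\nabla\eta)^{-1}\|\le C_d\bigl(1+\kappa^{-1}\|\nabla u_0\|_{L^\infty}\bigr)^{d-1}c_T^{-1}=:C_T .
\]
Invertibility itself follows directly from \eqref{eq:det_lower_d}.

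For (ii), the three pulled-back quantities are handled one at a time.
\begin{itemize}
\item The density bound is immediate: $\rho_0/J\le\|\rho_0\|_{L^\infty}/c_T$.
\item From the explicit velocity $v(t,x)=\bar v_0+(u_0(x)-\bar v_0)e^{-\kappa t}$ and $|\bar v_0|\le\|u_0\|_{L^\infty}$ (since $\rho_0$ is a probability measure), we get $|v|\le 3\|u_0\|_{L^\infty}$.
\item Differentiating gives $\nabla v=e^{-\kappa t}\nabla u_0$, so $\|\nabla v(\nabla\eta)^{-1}\|\le\|\nabla u_0\|_{L^\infty}C_T$.
\end{itemize}
Note that $u(t,\eta(t,x))=v(t,x)$, so $(\nabla u)(t,\eta)=\nabla v(\nabla\eta)^{-1}$ by the chain rule. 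This is why the last quantity is the pulled-back Eulerian gradient.

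For (iii), $\eta(t,\cdot)$ is $C^1$ because $u_0\in C^1$. Its Jacobian is nonvanishing on $K$, so by the inverse function theorem it is a local $C^1$-diffeomorphism near each point (using that $\eta$ is defined on a neighborhood by the same formula). Injectivity on $K$ then makes the global inverse $\eta(t,\cdot)^{-1}:\eta(t,K)\to K$ well defined, and it is $C^1$ because it agrees locally with the local inverses. The Eulerian quantities are then defined by
\[
\rho(t,y)=\rho_0(x)/J(t,x),\qquad u(t,y)=v(t,x),\qquad y=\eta(t,x).
\]
Their sup norms on $\eta(t,K)$ equal the Lagrangian sup norms from (ii). This gives the $L^\infty$ bound on $\rho$ and the $W^{1,\infty}$ bound on $u$, the latter via $\nabla u=\nabla v(\nabla\eta)^{-1}$ composed with $\eta^{-1}$.

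The only delicate point is (iii): the $W^{1,\infty}$ norm must be interpreted on $\eta(t,K)$, which may not be open. The fix I would use is to note that $\nabla u$ is determined by the formula above at points of $\eta(t,K)$, together with the bound on $\|\nabla u\|$. The constants depend only on $c_T$, $\kappa$, $\|u_0\|_{W^{1,\infty}}$ and $\|\rho_0\|_{L^\infty}$; $T$ enters only through $c_T$.
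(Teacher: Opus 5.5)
Your proposal is correct and follows the paper's proof essentially step by step. You use the explicit formula $\nabla\eta=I_d+\alpha(t)\nabla u_0$ with $\alpha\le 1/\kappa$, then an adjugate-type bound for the inverse; the paper uses the sharper $\|A^{-1}\|\le\|A\|^{d-1}/|\det A|$, which avoids your constant $C_d$. For (ii) both arguments use the explicit $v$ and $\nabla v=e^{-\kappa t}\nabla u_0$, and for (iii) both use the inverse function theorem plus injectivity.

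Your remarks about extending $\eta$ to a neighborhood of $K$, and about interpreting $W^{1,\infty}$ on the possibly non-open set $\eta(t,K)$, address points the paper passes over silently. One step is left implicit by both you and the paper: the global inverse on $\eta(t,K)$ must be continuous before it can agree locally with the local inverses. This follows because $\eta(t,\cdot)$ is a bounded perturbation of the identity, hence proper, and $K$ is closed.
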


\begin{proof}
We use the explicit formulas for the constant-kernel dynamics:
\[
v(t,x)=\bar v_0+\lt(u_0(x)-\bar v_0\rt)e^{-\kappa t},
\quad
\nabla v(t,x)=e^{-\kappa t}\nabla u_0(x),
\]
\[
\eta(t,x)=x+\alpha(t)\lt(u_0(x)-\bar v_0\rt),
\quad
\nabla\eta(t,x)=I_d + \alpha(t)\nabla u_0(x),
\quad
\alpha(t)=\frac{1-e^{-\kappa t}}{\kappa}.
\]
In particular,
\[
J(t,x)=\det\nabla\eta(t,x)=\det(I_d + \alpha(t)\nabla u_0(x)).
\]
Assumption \eqref{eq:det_lower_d} yields $J(t,x)\ge c_T$ for all $(t,x)\in[0,T]\times K$, thus $\nabla\eta(t,x)$ is invertible there. Moreover, since $\alpha(t)\le 1/\kappa$ and
$\nabla u_0\in L^\infty$, we have
\bq\label{eq:bdd_eta}
\|\nabla\eta(t,x)\|=\|I_d + \alpha(t)\nabla u_0(x)\|
\le 1+\frac{1}{\kappa}\|\nabla u_0\|_{L^\infty} =: N_T \quad \text{for all } (t,x)\in[0,T]\times K.
\eq
Hence,  the standard bound $\displaystyle 
\|A^{-1}\|\le \frac{\|A\|^{d-1}}{|\det A|}$
for $A=\nabla\eta(t,x)$, together with \eqref{eq:bdd_eta} proves (i),
\[
\|(\nabla\eta(t,x))^{-1}\|\le \frac{N_T^{d-1}}{J(t,x)}\leq  \frac{N_T^{d-1}}{c_T}=:C_T\quad \text{for all }(t,x)\in[0,T]\times K.
\]
Next, define the pulled-back density and the pulled-back Eulerian velocity gradient by
\[
\rho^\sharp(t,x):=\frac{\rho_0(x)}{J(t,x)},\quad M^\sharp(t,x):=\nabla v(t,x)(\nabla\eta(t,x))^{-1}.
\]
Then, by the explicit formula for $v$,
\[
\sup_{0\le t\le T}\|v(t,\cdot)\|_{L^\infty(K)} \le |\bar v_0|+\|u_0\|_{L^\infty} \le 2\|u_0\|_{L^\infty},
\]
where we used $\rho_0\in\calP(\R^d)$. Moreover, by (i),
\bq\label{eq:pullback-grad-bd}
\|M^\sharp(t,x)\| \le \|\nabla v(t,x)\| \|(\nabla\eta(t,x))^{-1}\| \le e^{-\kappa t}\|\nabla u_0\|_{L^\infty} C_T \le \|\nabla u_0\|_{L^\infty} C_T .
\eq
Finally, since $J(t,x)\ge c_T$, we have
\[
\|\rho^\sharp(t,\cdot)\|_{L^\infty(K)} = \lt\|\frac{\rho_0}{J(t,\cdot)}\rt\|_{L^\infty(K)} \le \frac{\|\rho_0\|_{L^\infty}}{c_T}.
\]
Collecting these estimates proves (ii).

For (iii), fix $t\in[0,T]$. By (i), the derivative $\nabla\eta(t,x)$ is invertible at each $x\in K$, so the inverse function theorem shows that $\eta(t,\cdot)$ is a local $C^1$-diffeomorphism near every point of $K$. If, in addition, the restriction $\eta(t,\cdot)|_K$ is injective, then its inverse on $\eta(t,K)$ is well defined and locally $C^1$, hence $\eta(t,\cdot)|_K$ is a $C^1$-diffeomorphism from $K$ onto $\eta(t,K)$.
 Writing $\xi_t:=(\eta(t,\cdot)|_K)^{-1}$, the Eulerian solution on $\eta(t,K)$
is given by
\[
u(t,z)=v(t,\xi_t(z)),\quad \rho(t,z)=\rho^\sharp(t,\xi_t(z)),\quad z\in\eta(t,K).
\]
Furthermore,
\[
\nabla u(t,z)=M^\sharp(t,\xi_t(z)).
\]
Hence, the estimates in (ii) imply
\[
\sup_{0\le t\le T}\|\rho(t)\|_{L^\infty(\eta(t,K))} + \sup_{0\le t\le T}\|u(t)\|_{W^{1,\infty}(\eta(t,K))} \le C.
\]
This completes the proof.
\end{proof}

\begin{remark}
While the  Eulerian formulation yields that for sub-critical initial data,
the spectral gap $\sigma(\nabla u(t,\cdot))\backslash (-\infty,-\kappa)$ remains  lower bounded,  the Lagrangian formulation provides  the pulled-back uniform gradient bound \eqref{eq:pullback-grad-bd}. Under the additional injectivity
assumption on $\eta(t,\cdot)|_K$, this becomes a genuine Eulerian $L^\infty(\eta(t,K))$ bound on $\nabla u(t,\cdot)$.
\end{remark}

\begin{remark}[Uniform subcriticality: role of compact support]
The compactness of $K=\supp\rho_0$ is not essential for the uniform non-degeneracy estimates in Proposition \ref{prop:subcri_const_d}. The argument only relies on the existence of a strictly positive lower bound for 
\[
\det\nabla\eta(t,x)=\det(I_d + \alpha(t)\nabla u_0(x))
\]
 on $[0,T]\times K$.

However, when $K$ is compact, the subcritical condition can be formulated in a more concrete way. Indeed, since $(t,x)\mapsto \det\nabla\eta(t,x)$ is continuous and $\alpha(t)$ takes values in the compact interval $[0,\alpha(T)]$, the pointwise condition
\[
\det\nabla\eta(t,x)>0
\quad \text{for all } (t,x)\in[0,T]\times K
\]
automatically implies the uniform lower bound
\[
\inf_{(t,x)\in[0,T]\times K}\det\nabla\eta(t,x)>0.
\]
In this sense, compactness of the support allows one to replace the explicit uniform positivity assumption \eqref{eq:det_lower_d} by a pointwise non-degeneracy condition.
\end{remark}

%
%
%
%
%

\subsection{A two-dimensional strain--vorticity reformulation}
 
Although Proposition \ref{prop:const-break} gives an exact spectral criterion in all dimensions, it is often useful to formulate the condition in terms of more directly interpretable components of the initial velocity gradient. We now provide such a reformulation in two dimensions, using the symmetric and antisymmetric parts of $\nabla u_0$. The purpose is not to introduce a different breakdown mechanism, but to express the same spectral threshold in an explicit initial-configuration form and to make transparent the role of vorticity. Such strain--vorticity or spectral-gap formulations are natural in two-dimensional critical-threshold theory; see, for instance, \cite{LT04}. 
We recall that Proposition \ref{prop:const-break} characterizes finite-time Lagrangian degeneracy by the existence of a real eigenvalue of $\nabla u_0(x)$ below $-\kappa$. In general, this condition cannot be expressed solely in terms of the eigenvalues of $\nabS u_0(x)$, since the antisymmetric part may eliminate real spectrum altogether.

In two space dimensions, however, $\nabA u_0$ is completely determined by the scalar vorticity $\omega_0=\nabla\times u_0$. As a consequence, the spectrum of $\nabla u_0(x)$ can be described explicitly in terms of the eigenvalues of $\nabS u_0(x)$ and $\omega_0(x)$. This yields a closed-form and pointwise sharp characterization of the supercritical regime.

\begin{corollary}\label{cor:2d_sharp}
Let $d=2$ and consider the  system \eqref{LA:const}. Assume $\rho_0\in C^0\cap\calP(\R^2)$ and $u_0\in C^1\cap W^{1,\infty}(\R^2;\R^2)$. For $x\in\supp(\rho_0)$, let $\mu_1(x)\le \mu_2(x)$ be the eigenvalues of $\nabS u_0(x)$ and set $\omega_0(x):=(\nabla\times u_0)(x)$. Define
\[
\lambda_-(\mu_1,\mu_2,\omega_0)
:=\frac{\mu_1+\mu_2-\sqrt{(\mu_1-\mu_2)^2-\omega_0^2}}{2},
\]
whenever $(\mu_1-\mu_2)^2\ge \omega_0^2$.
We also define the \emph{supercritical set} in the parameter space by
\[
\calS_{{\rm sup}}
:=\lt\{(\mu_1,\mu_2,\omega_0)\in\R^3:  
(\mu_1-\mu_2)^2\ge \omega_0^2  \text{ and } 
\lambda_-(\mu_1,\mu_2,\omega_0)<-\kappa\rt\}.
\]
Then the following are equivalent:
\begin{enumerate}
\item[\rm (i)] There exists $x_* \in \supp(\rho_0)$ and $t>0$ such that $\det\nabla\eta(t,x_*)=0$.
\item[\rm (ii)] There exists $x_*\in\supp\rho_0$ such that $(\mu_1(x_*),\mu_2(x_*),\omega_0(x_*))\in \calS_{{\rm sup}}$.
\end{enumerate}
In this case, the first breakdown time along the characteristic issued from such an $x_*$ is
\[
t_*=-\frac1\kappa\log\lt(1+\frac{\kappa}{\lambda_-(x_*)}\rt)\quad
\lambda_-(x_*):=\lambda_-\big(\mu_1(x_*),\mu_2(x_*),\omega_0(x_*)\big).
\]
\end{corollary}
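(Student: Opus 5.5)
The plan is to reduce Corollary \ref{cor:2d_sharp} to Proposition \ref{prop:const-break}, which says that degeneracy at $x_*$ occurs if and only if $\nabla u_0(x_*)$ has a real eigenvalue below $-\kappa$. What remains is to compute the spectrum of a $2\times2$ matrix in terms of its symmetric part and its vorticity.

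Fix $x\in\supp\rho_0$ and write $M=\nabla u_0(x)=S+A$, where $S=\nabS u_0(x)$ and $A=\nabA u_0(x)$. In two dimensions we have
\[
A=\frac{\omega_0}{2}\begin{pmatrix}0&-1\\1&0\end{pmatrix}.
\]
The trace of $M$ is $\tr M=\tr S=\mu_1+\mu_2$. For the determinant, rotate coordinates so that $S=\diag(\mu_1,\mu_2)$; $A$ is invariant under rotations, so it keeps the same form. This gives
\[
\det M=\mu_1\mu_2+\frac{\omega_0^2}{4}.
\]
The characteristic polynomial is then $\lambda^2-(\mu_1+\mu_2)\lambda+\mu_1\mu_2+\omega_0^2/4$, whose discriminant is
\[
(\mu_1+\mu_2)^2-4\mu_1\mu_2-\omega_0^2=(\mu_1-\mu_2)^2-\omega_0^2.
\]
It follows that $M$ has real spectrum exactly when $(\mu_1-\mu_2)^2\ge\omega_0^2$. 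In that case the eigenvalues are $\lambda_\pm$, and $\lambda_-$ is the smaller one, given by the formula in the statement.

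For the equivalence, suppose (i) holds. Proposition \ref{prop:const-break} then gives a real eigenvalue of $M$ at $x_*$ that lies below $-\kappa$. Real eigenvalues force the discriminant to be nonnegative, and since $\lambda_-\le\lambda_+$, we get $\lambda_-<-\kappa$. So $(\mu_1,\mu_2,\omega_0)(x_*)\in\calS_{\rm sup}$. Conversely, membership in $\calS_{\rm sup}$ means $\lambda_-$ is a real eigenvalue of $M$ below $-\kappa$, and Proposition \ref{prop:const-break}(iii)$\Rightarrow$(i) yields degeneracy.

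For the breakdown time, Proposition \ref{prop:const-break} gives $t_*=-\kappa^{-1}\log(1+\kappa/\lambda_{\min})$, where $\lambda_{\min}$ is the smallest real eigenvalue below $-\kappa$. In two dimensions this smallest eigenvalue is exactly $\lambda_-$: if $\lambda_+<-\kappa$ too, then $\lambda_-\le\lambda_+$ is still the minimum.

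There is no serious obstacle. The only points that need care are the invariance of the determinant identity under the diagonalizing rotation, which holds because $2\times2$ rotations commute with $J=\begin{pmatrix}0&-1\\1&0\end{pmatrix}$, and the boundary case of a zero discriminant. In that case $\lambda_-=\lambda_+$ is real and the argument is unchanged.
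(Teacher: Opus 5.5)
Your proposal is correct and follows essentially the same route as the paper. You reduce to Proposition \ref{prop:const-break}, diagonalize $\nabS u_0(x_*)$ by an orthogonal change of basis, compute $\tr M_*=\mu_1+\mu_2$ and $\det M_*=\mu_1\mu_2+\omega_0^2/4$, and read off the discriminant $(\mu_1-\mu_2)^2-\omega_0^2$ and the smaller root $\lambda_-$. You also note that the diagonalizing basis can be taken in $SO(2)$, and that in any case only $\omega_0^2$ enters, which makes explicit a point the paper leaves implicit.
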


\begin{remark}[Rotation as a stabilizing effect] Corollary \ref{cor:2d_sharp} shows explicitly that the antisymmetric part of the initial velocity gradient acts against finite-time degeneracy. Indeed, the initial vorticity enters through the discriminant
\[
(\mu_1-\mu_2)^2-\omega_0^2,
\]
so that a large rotational component may prevent the appearance of a real eigenvalue of $\nabla u_0(x_*)$ below $-\kappa$. In this sense, rotation has a stabilizing effect on the Lagrangian flow and may obstruct the loss of local invertibility. This is consistent with related observations in two-dimensional rotational flow models and Eulerian critical-threshold theory; see, for instance, \cite{HT17,LT04,Tad23}.
\end{remark}
\begin{proof}  Fix $x_*\in\supp\rho_0$. The pointwise equivalence follows from Proposition \ref{prop:const-break}. Taking the existence of such a point $x_*$ gives the equivalence between {\rm(i)} and {\rm(ii)}.

In $d=2$, the skew-symmetric part $A_*:=\nabA u_0(x_*)$ has the form
\[
A_*= \frac12\begin{pmatrix}0&-\omega_0(x_*)\\ \omega_0(x_*)&0\end{pmatrix},  
\]
while $S_*=\nabS u_0(x_*)$ is symmetric with eigenvalues $\mu_1\le\mu_2$. Choosing an orthonormal eigenbasis of $S_*$, we may assume without loss of generality that
\[
S_*=\begin{pmatrix}\mu_1(x_*) & 0 \\ 0 & \mu_2(x_*) \end{pmatrix}.
\]
Since trace and determinant are invariant under orthogonal similarity transformations, the quantities $\tr M_*$ and $\det M_*$ computed below do not depend on the chosen eigenbasis of $S_*$.

In this basis, the full gradient matrix reads
\[
M_*=S_*+A_*= \begin{pmatrix}
\mu_1(x_*) & - \frac{\omega_0(x_*)}2\\
\frac{\omega_0(x_*)}2 & \mu_2(x_*)
\end{pmatrix}.
\]

From this explicit $2\times2$ representation, we compute the basic invariants
\[
\tr M_*=\mu_1(x_*)+\mu_2(x_*), \quad \det M_* =\mu_1(x_*)\mu_2(x_*) + \frac{\omega_0(x_*)^2}4.
\]
Consequently, the characteristic polynomial of $M_*$,
\[
p(\lambda)=\det(\lambda I_2-M_*) =\lambda^2-(\tr M_*)\lambda+\det M_*,
\]
has discriminant
\begin{align*}
(\tr M_*)^2-4\det M_* &=(\mu_1(x_*)+\mu_2(x_*))^2-4\lt(\mu_1(x_*)\mu_2(x_*) + \frac{\omega_0(x_*)^2}4\rt)\cr
&=(\mu_1(x_*)-\mu_2(x_*))^2- \omega_0(x_*)^2.
\end{align*}
Therefore, $M_*$ has a real eigenvalue below $-\kappa$ if and only if
\[
(\mu_1(x_*),\mu_2(x_*),\omega_0(x_*))\in \calS_{\rm sup}.
\]
By Proposition \ref{prop:const-break}, this is equivalent to the existence of $t>0$ such that
\[
\det\nabla\eta(t,x_*)=0.
\]
The formula for the first breakdown time then follows from the general expression
\[
t_*=-\frac1\kappa\log\lt(1+\frac{\kappa}{\lambda_-(x_*)}\rt),
\]
again given by Proposition \ref{prop:const-break}. This completes the proof.
\end{proof}

%
%
%
%
%

\subsection{A perturbative criterion in terms of the symmetric part}

The two-dimensional result above provides a complete algebraic characterization of breakdown in terms of the symmetric and skew-symmetric parts of the velocity gradient. In higher dimensions, however, a comparably simple closed-form description in terms of these two components is not available in general, since the spectral structure of nonsymmetric matrices becomes considerably more involved. Nevertheless, it is still possible to derive robust sufficient conditions for finite-time degeneration of the Lagrangian flow by treating the skew-symmetric part as a perturbation of the symmetric one and detecting a sign change in the determinant of $I_d+\alpha\nabla u_0(x_*)$. The idea is to choose a reference time parameter $\alpha_1\in(0,1/\kappa)$ for which the diagonal part $I_d+\alpha_1 S_*$ already has a negative determinant, and then show that this sign persists under a sufficiently small skew-symmetric perturbation. This is a sufficient consequence of Proposition \ref{prop:const-break} and serves as preparation for the non-constant kernel analysis below.

\begin{corollary} \label{cor:gen_sup_const}
Let $d \geq 3$ and consider the system \eqref{LA:const}. Assume $\rho_0\in C^0\cap\calP(\R^d)$ and $u_0\in C^1\cap W^{1,\infty}(\R^d;\R^d)$. Suppose that there exist $x_*\in\supp(\rho_0)$, $\alpha_1\in(0,\frac1\kappa)$, and $\delta>0$ such that, setting
\[
M_*:=\nabla u_0(x_*),\quad S_*:=\nabS u_0(x_*),\quad A_*:=\nabA u_0(x_*),
\]
and denoting by $\mu_1\le\cdots\le\mu_d$ the eigenvalues of $S_*$, one has
\bq\label{eq:sep0}
|1+\alpha_1\mu_j|\ge \delta \quad\text{for all }j=1,\dots,d,
\eq
and the number
\[
r:=\#\{j\in\{1,\dots,d\}:1+\alpha_1\mu_j<0\}
\]
is odd. Assume moreover the smallness condition
\bq\label{eq:sma0}
\alpha_1\|A_*\|<\delta.
\eq
Then there exists $\alpha_*\in(0,\alpha_1)$ such that
\[
\det(I_d+\alpha_*M_*)=0.
\]
Consequently, there exists
\[
t_*\in\lt(0,-\frac1\kappa\log(1-\kappa\alpha_1)\rt)
\]
such that
\[
\det\nabla\eta(t_*,x_*)=0.
\]
In particular, the Lagrangian flow degenerates at $x_*$ in finite time.
\end{corollary}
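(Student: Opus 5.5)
The plan is an intermediate value argument for the continuous function $F(\alpha):=\det(I_d+\alpha M_*)$ on $[0,\alpha_1]$. We have $F(0)=1>0$, so it suffices to show $F(\alpha_1)<0$. Continuity of $F$ in $\alpha$ then gives some $\alpha_*\in(0,\alpha_1)$ with $F(\alpha_*)=0$. Once $\alpha_*$ is found, Proposition \ref{prop:const-break} (or directly \eqref{expl_con}) translates it into a time. Since $\alpha(t)=(1-e^{-\kappa t})/\kappa$ is a strictly increasing bijection from $(0,\infty)$ onto $(0,1/\kappa)$, we set $t_*:=\alpha^{-1}(\alpha_*)$. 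Then $t_*<\alpha^{-1}(\alpha_1)=-\frac1\kappa\log(1-\kappa\alpha_1)$ and $\det\nabla\eta(t_*,x_*)=F(\alpha_*)=0$.

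To show $F(\alpha_1)<0$, write
\[
I_d+\alpha_1M_*=(I_d+\alpha_1S_*)+\alpha_1A_*=:B+E .
\]
The matrix $B$ is symmetric with eigenvalues $1+\alpha_1\mu_j$, so $\det B=\prod_j(1+\alpha_1\mu_j)$. By \eqref{eq:sep0} none of these factors vanishes, and since $r$ of them are negative with $r$ odd, $\det B<0$. Moreover, by \eqref{eq:sep0} the smallest singular value of $B$ is $\min_j|1+\alpha_1\mu_j|\ge\delta$, so $\|B^{-1}\|\le 1/\delta$.

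Next consider the path $s\mapsto B+sE$ for $s\in[0,1]$. For every such $s$,
\[
\|B^{-1}(sE)\|\le \frac{\alpha_1\|A_*\|}{\delta}<1
\]
by \eqref{eq:sma0}. Hence $I_d+sB^{-1}E$ is invertible, and so is $B+sE=B(I_d+sB^{-1}E)$. Thus $g(s):=\det(B+sE)$ is a continuous function with no zeros on $[0,1]$. It therefore has constant sign, and $F(\alpha_1)=g(1)$ has the same sign as $g(0)=\det B<0$. Equivalently, one can use the Weyl-type bound $\sigma_{\min}(B+E)\ge\sigma_{\min}(B)-\|E\|>0$ for singular values together with the same homotopy.

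There is no serious obstacle. The only point needing care is that the perturbation estimate gives invertibility along the whole homotopy, not just at the endpoint, so that the sign cannot flip. This is exactly what the operator-norm bound provides, because the bound holds uniformly in $s$. The restriction $d\ge3$ plays no role in the argument; it only reflects that $d=2$ is handled sharply by Corollary \ref{cor:2d_sharp}.
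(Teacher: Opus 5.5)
Your proposal is correct and follows the paper's proof essentially step by step. You reduce to $\det(I_d+\alpha_1 M_*)<0$ via the intermediate value theorem, obtain $\det(I_d+\alpha_1 S_*)<0$ from the odd parity of $r$, and show the skew part cannot flip the sign because $\|(I_d+\alpha_1 S_*)^{-1}\alpha_1 A_*\|\le \alpha_1\|A_*\|/\delta<1$. The only cosmetic difference is the last step: the paper deduces $\det(I_d+X)>0$ from the eigenvalues of $X$ lying in the open unit disk, whereas you use the homotopy $s\mapsto B+sE$; both work equally well here.
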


\begin{remark}[A concrete sufficient scenario]
Assume that the symmetric part $\nabS u_0(x_*)$ has exactly one negative eigenvalue and that
\[
\lambda_{\min} (\nabS u_0(x_*)) < -(\kappa+\|\nabA u_0(x_*)\|), \quad \sigma(\nabS u_0(x_*))\setminus\{\lambda_{\min}(\nabS u_0(x_*))\}\subset[0,\infty).
\]
Then one may choose $\alpha_1\in(0,1/\kappa)$ close enough to $1/\kappa$ so that
\[
1+\alpha_1\lambda_{\min}(\nabS u_0(x_*))<-\alpha_1\|\nabA u_0(x_*)\|,
\]
while
\[
1+\alpha_1\mu_j>0
\quad\text{for all other eigenvalues }\mu_j\ge0.
\]
Hence \eqref{eq:sep0} holds for some $\delta>0$, exactly one factor $1+\alpha_1\mu_j$ is negative, and \eqref{eq:sma0} is satisfied as well. Therefore Corollary \ref{cor:gen_sup_const} applies, and the associated flow degenerates at $x_*$ in finite time.
\end{remark}

\begin{proof}[Proof of Corollary \ref{cor:gen_sup_const}]
We proceed in several steps. 

\medskip
\noindent
\textit{Step 1: Reduction to an algebraic root in $\alpha$.}
We first recall
\[
\nabla\eta(t,x_*)=I_d+\alpha(t)M_*, \quad \alpha(t)=\frac{1-e^{-\kappa t}}{\kappa}\in \lt[0,\frac1\kappa\rt).
\]
Let $f(\alpha):=\det(I_d+\alpha M_*)$ for $\alpha\in[0,1/\kappa)$. Then $\det\nabla\eta(t,x_*)=f(\alpha(t))$, and $f$ is continuous with $f(0)=1>0$. Hence it suffices to prove $f(\alpha_1)<0$, since then the intermediate value theorem yields $\alpha_*\in(0,\alpha_1)$ with $f(\alpha_*)=0$, and the monotonicity of $\alpha(t)$ provides $t_*>0$ such that $\alpha(t_*)=\alpha_*$.

\medskip
\noindent
\textit{Step 2: Diagonalization of $S_*$ and sign of the unperturbed determinant.}
Since $S_*$ is symmetric, there exists an orthogonal matrix $Q\in O(d)$ such that
\[
Q^\top S_*Q=\Lambda:=\diag(\mu_1,\dots,\mu_d).
\]
Set $\widetilde A_*:=Q^\top A_*Q$, which remains skew-symmetric, and note that $\|\widetilde A_*\|=\|A_*\|$.
By invariance of the determinant under orthogonal similarity transformations,
\[
f(\alpha_1)=\det \lt(I_d+\alpha_1(\Lambda+\widetilde A_*)\rt).
\]
Define the diagonal matrix
\[
D:=I_d+\alpha_1\Lambda=\diag(d_1,\dots,d_d), \quad d_j:=1+\alpha_1\mu_j,
\]
so that
\[
f(\alpha_1)=\det(D+\alpha_1\widetilde A_*).
\]
By the separation condition \eqref{eq:sep0}, we have $d_j\neq0$ for all $j$ and hence $D$ is invertible.
Moreover, since exactly $r$ of the $d_j$ are negative and $r$ is odd, it follows that
\[
\det D=\prod_{j=1}^d d_j= -\prod_{j=1}^d |d_j|< 0.
\]

\medskip
\noindent
\textit{Step 3: Stability of the sign under a small skew perturbation.}
We write
\[
f(\alpha_1)=\det(D+\alpha_1\widetilde A_*)
=\det D \cdot \det(I_d+X),
\quad
X:=D^{-1}(\alpha_1\widetilde A_*).
\]
Since $\|D^{-1}\|\le 1/\delta$, we obtain
\[
\|X\|\le \frac{\alpha_1\|A_*\|}{\delta}<1
\]

This implies that every eigenvalue $\lambda$ of $X$ satisfies $|\lambda|<1$. Hence no real eigenvalue can cross $-1$, and complex eigenvalues occur in conjugate pairs. Therefore
\[
\det(I_d+X)=\prod_{\lambda\in\sigma(X)}(1+\lambda)>0.
\]
It follows that
\[
f(\alpha_1)=\det D \cdot \det(I_d+X)<0.
\]

\medskip
\noindent
\textit{Step 4: Conclusion.}
Since $f(0)=1>0$ and $f(\alpha_1)<0$, there exists $\alpha_*\in(0,\alpha_1)$ such that
\[
f(\alpha_*)=0.
\]
Finally, by the strict monotonicity of $\alpha(t)$, there exists $t_*>0$ such that $\alpha(t_*)=\alpha_*$, and hence
\[
\det\nabla\eta(t_*,x_*)=0.
\]
This completes the proof.
\end{proof}

%
%
%
%
%

\section{General communication kernels}\label{sec:non_const}

In this section, we consider the Euler--alignment dynamics \eqref{EA} with a non-constant communication kernel $\phi$. In contrast to the constant-kernel case, one no longer has an explicit representation of the flow Jacobian $J(t,x)=\det\nabla\eta(t,x)$ purely in terms of $\nabla u_0(x)$. Thus, instead of a sharp pointwise spectral characterization, our goal is to derive quantitative sufficient criteria for the degeneration of the Lagrangian flow.  As in the constant-kernel analysis, we first treat the two-dimensional case, where the algebraic structure remains more explicit, and then turn to the general case $d\ge3$.

%
%
%
%
%

\subsection{Algebraic decomposition of the Jacobian and perturbative bounds}

We begin by isolating the leading damping mechanism in the evolution of $\nabla v$ and collecting the genuinely nonlocal contributions into a perturbative remainder. Differentiating the Lagrangian velocity equation in \eqref{LA}, we obtain
\[
\pa_t \nabla v(t,x) = - \kappa (\phi * \rho)(t,\eta(t,x)) \nabla v(t,x) + G(t,x),
\]
where
\[
G(t,x) := \kappa \intr \nabla\eta(t,x) \lt(\nabla\phi(\eta(t,x)-\eta(t,y))\otimes (v(t,y)-v(t,x))\rt)\rho_0(\dy).
\]
 Along each Lagrangian trajectory
$x\mapsto \eta(t,x)$, this is a linear equation with a time-dependent scalar
damping coefficient
\[
a(t,x):=\kappa(\phi * \rho)(t,\eta(t,x))\ge0.
\]

Introducing the integrating factor
\[
\Gamma(t,s;x):=\exp\lt(-\int_s^t a(\tau,x)\,\rd\tau\rt),
\]
we may write $\nabla v$ as 
\[
\nabla v(t,x) = \Gamma(t,0;x)\,\nabla u_0(x) + \int_0^t \Gamma(t,s;x)\,G(s,x)\,\ds.
\]
Integrating in time and using $\pa_t\nabla\eta=\nabla v$, we obtain
\[
\nabla\eta(t,x) = I_d + \alpha(t,x)\,\nabla u_0(x) + R(t,x),
\]
where the \emph{effective time} $\alpha(t,x)$ and the remainder $R$ are expressed 
in terms of $\displaystyle \beta(t,s;x) := \int_s^t \Gamma(\tau,s;x)\,\rd\tau$,
\[
\alpha(t,x):=\beta(t,0; x), \quad  R(t,x):=\int_0^t \beta(t,s;x)\,G(s,x)\,\ds.
\]
Since $\rho(t)\in\calP(\R^d)$ and $\phi\in L^\infty(\R^d)$, we have the uniform bound
\[
0 \leq (\phi * \rho)(t,z)\le \|\phi\|_{L^\infty}
\quad\text{for all }(t,z)\in[0,\infty)\times\R^d,
\]
and thus
\[
e^{-\kappa\|\phi\|_{L^\infty}(t-s)} \le \Gamma(t,s;x) \le 1.
\]
Consequently, the effective time admits the bounds
\[
\alpha_\infty(t) := \frac{1-e^{-\kappa\|\phi\|_{L^\infty}t}}{\kappa\|\phi\|_{L^\infty}} =\int_0^t e^{-\kappa\|\phi\|_{L^\infty}s}\,\ds \le \alpha(t,x)\le t,
\]
which is uniform in $x$.

Then, we decompose
\bq\label{eq:decom_gen}
\nabla\eta(t,x) = I_d+\alpha(t,x) \nabS u_0(x) + E(t,x), \quad  E(t,x):=
\alpha(t,x) \nabA u_0(x)+ R(t,x),
\eq
and this gives the following determinant representation
\[
J(t,x)=\det\nabla\eta(t,x)= \det\lt(I_d+\alpha(t,x)\,\nabS u_0(x)+  E(t,x)\rt),
\]
which will serve as the basic decomposition for the supercritical analysis in the subsequent subsections.

\begin{lemma}\label{lem:gro} Consider the system \eqref{LA}. Assume $\rho_0\in C^0\cap\calP(\R^d)$, $\phi \in W^{1,\infty}(\R^d; \R_{\ge 0})$ and $u_0\in C^1\cap W^{1,\infty}(\R^d;\R^d)$. Fix $x_* \in \supp (\rho_0)$. Then, for all $t \geq 0$ we have
\[
\|R(t,x_*)\| \leq \frac{C_1^*}{C_0^2} \lt( e^{C_0 t} - 1 - C_0 t\rt),
\]
where $C_0, C_1^*$ are positive constants given by
\[
C_0 := \max\{1, \kappa\|\nabla \phi\|_{L^\infty} \rdv \}, \quad C_1^* := \kappa\|\nabla\phi\|_{L^\infty} \rdv (1 + \|\nabla u_0(x_*)\|).
\]
\end{lemma}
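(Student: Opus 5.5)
The plan is a Gr\"onwall-type argument for the pair $(\|\nabla\eta(t,x_*)\|,\|\nabla v(t,x_*)\|)$. Once we control $\|G(s,x_*)\|$ by $\|\nabla\eta(s,x_*)\|$, we integrate against the kernel $\beta(t,s;x_*)\le t-s$ to bound $R$. The first step is a pointwise bound on $G$. Since the alignment term is an average with nonnegative weights, the velocity oscillation on the support is nonincreasing: $|v(t,y)-v(t,x)|\le\rdv$ for all $x,y\in\supp\rho_0$ and $t\ge0$. This is the standard maximum-principle argument for \eqref{LA}; since $\phi\ge0$, the diameter of $\{v(t,x)\}$ along any direction cannot increase. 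Using $\rho_0\in\calP$ and the definition of $G$, this gives
\[
\|G(s,x_*)\|\le \kappa\|\nabla\phi\|_{L^\infty}\rdv\,\|\nabla\eta(s,x_*)\| =: \beta\,\|\nabla\eta(s,x_*)\|.
\]

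The second step closes the system. From $\nabla v(t)=\Gamma(t,0)\nabla u_0+\int_0^t\Gamma(t,s)G(s)\,\ds$ and $\Gamma\le1$ we get
\[
\|\nabla v(t)\|\le\|\nabla u_0(x_*)\|+\beta\int_0^t\|\nabla\eta(s)\|\,\ds .
\]
From $\pa_t\nabla\eta=\nabla v$ we get
\[
\|\nabla\eta(t)\|\le1+\int_0^t\|\nabla v(s)\|\,\ds .
\]
Set $y(t):=\|\nabla\eta(t)\|+\|\nabla v(t)\|$. Then $y(t)\le y(0)+C_0\int_0^t y(s)\,\ds$ with $y(0)=1+\|\nabla u_0(x_*)\|$ and $C_0=\max\{1,\beta\}$. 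Gr\"onwall yields
\[
\|\nabla\eta(s,x_*)\|\le(1+\|\nabla u_0(x_*)\|)e^{C_0s},
\]
and therefore
\[
\|G(s,x_*)\|\le C_1^* e^{C_0 s}.
\]

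The third step bounds the remainder. Using $\beta(t,s;x_*)\le t-s$, we have
\[
\|R(t,x_*)\|\le C_1^*\int_0^t(t-s)e^{C_0s}\,\ds .
\]
An elementary computation shows that this integral equals $C_0^{-2}(e^{C_0t}-1-C_0t)$, which is exactly the claimed bound.

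The main obstacle is the first step. We need a clean justification of the oscillation bound $|v(t,y)-v(t,x)|\le\rdv$ for the nonlocal ODE system, uniformly on $\supp\rho_0$. I would argue as follows: for each fixed unit vector $e$, the quantity $\max_x e\cdot v-\min_x e\cdot v$ is nonincreasing. At a maximizer, $\pa_t(e\cdot v)\le0$ because $\phi\ge0$, and symmetrically at a minimizer. This is made rigorous via the upper Dini derivative of the max (or a Danskin-type lemma), using compactness or at least boundedness of $v$ on the support. A secondary care point is the factor ordering in $G$ when taking the operator norm: $\|\nabla\eta\,(a\otimes b)\|\le\|\nabla\eta\|\,|a|\,|b|$. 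This ordering is harmless.
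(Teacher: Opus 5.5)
Your proposal is correct and follows the paper's proof essentially step for step. Both use the maximum-principle bound on the velocity oscillation, then a Gr\"onwall estimate for $\|\nabla\eta(t,x_*)\|+\|\nabla v(t,x_*)\|$ with rate $C_0$, and finally $\beta(t,s;x_*)\le t-s$ with the explicit integral $\int_0^t(t-s)e^{C_0 s}\,\ds$. The only difference is cosmetic: you close the Gr\"onwall loop in integral (Duhamel) form using $\Gamma\le 1$, while the paper writes differential inequalities for the norms and drops the damping term via $\phi\ge0$, so your version avoids differentiating a norm.
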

\begin{proof}
 Introduce the diameter of position and velocity quantities associated to \eqref{LA} as
 \[
 \rd_\eta(t): = \esssup_{x,y \in \supp(\rho_0)}|\eta(t,x) - \eta(t,y)|, \quad  \rd_v(t): = \esssup_{x,y \in \supp(\rho_0)}|v(t,x) - v(t,y)|.
 \]
A standard maximum principle yields
 \[
 \rd_v(t) \leq \rdv \quad \text{for all } t \geq 0.
 \]
 
Differentiate the system \eqref{LA} with respect to $x$, we find
\begin{align*}
\pa_t \nabla \eta &= \nabla v, \cr
\pa_t \nabla v &= \kappa \intr \nabla \eta(x)  \lt(\nabla\phi(\eta(x) - \eta(y))\otimes(v(y) - v(x))\rt) \rho_0(\dy) \cr
&\quad  - \kappa \intr \phi(\eta(x) - \eta(y))   \nabla v(x)\,\rho_0(\dy).
\end{align*}
We first readily find
\[
\frac{\rd}{\dt} \|\nabla \eta(t,x_*)\| \leq \|\nabla v(t,x_*)\|,
\]
For $\|\nabla v\|$, we note that 
\[
\lt|\intr \nabla \eta(x_*)  \lt(\nabla\phi(\eta(x_*) - \eta(y))\otimes(v(y) - v(x_*))\rt) \rho_0(\dy)\rt| \leq \|\nabla \eta(x_*)\|\|\nabla\phi\|_{L^\infty} \rd_v(t),
\] 
and since $\phi \geq 0$, we obtain
\[
\frac{\rd}{\dt}\|\nabla v(t,x_*) \|  \leq \kappa \|\nabla\phi\|_{L^\infty}\|\nabla \eta(t,x_*) \|  \rdv  
\] 
Thus, we arrive at
\[
\frac{\rd}{\dt} \lt( \|\nabla \eta(t,x_*)\| + \|\nabla v(t,x_*)\| \rt) \leq C_0 \lt( \|\nabla \eta(t,x_*)\| + \|\nabla v(t,x_*)\|\rt),
\]
where $C_0 > 0$ is given by
\[
C_0 = \max\{1, \kappa\|\nabla \phi\|_{L^\infty} \rdv \}.
\]
An application of Gr\"onwall's lemma yields
\[
\|\nabla \eta(t,x_*) \| + \|\nabla v(t,x_*)\| \leq \lt( 1 + \|\nabla u_0(x_*)\| \rt ) e^{C_0 t},
\]

Using the above estimate together with the definition of $R$, we obtain
\[
\|R(t,x_*)\| \le \int_0^t (t-s)\|G(s,x_*)\|\,\ds \le C_1^* \int_0^t (t-s) e^{C_0 s}\,\ds = \frac{C_1^*}{C_0^2} \lt(e^{C_0 t} - 1 - C_0 t\rt),
\]
where we used
\begin{align*}
\|G(s,x_*)\| &\leq \kappa \|\nabla \eta(s,x_*)\| \|\nabla\phi\|_{L^\infty} \rdv \cr
&\leq \kappa\|\nabla\phi\|_{L^\infty} \rdv\lt(1 + \|\nabla u_0(x_*)\|\rt) e^{C_0 s} \cr
&= C_1^* e^{C_0 s}.
\end{align*}
This completes the proof.
\end{proof}

\begin{remark} 
A key feature of the general-kernel case is that the remainder term $R(t,x)$ in \eqref{eq:decom_gen} is not controlled uniformly in time by the present argument. In particular, Lemma \ref{lem:gro} yields a bound that grows with $t$, so the method does not provide a global-in-time critical-threshold theory in the classical sense.

This is not the objective here. The criterion is instead a finite-time perturbative criterion. At the reference time $t_1$ chosen in Theorem \ref{thm:gen_phi_d}, the leading symmetric deformation
\[
I_d+\alpha(t_1,x_*)\nabS u_0(x_*)
\]
has crossed the algebraic degeneracy threshold with a prescribed spectral margin. The assumptions then require the skew-symmetric contribution and the nonlocal remainder to remain smaller than this margin. Under this balance, the sign change of the leading determinant persists under the perturbation, and the intermediate value argument yields a time $t_*\in(0,t_1)$ at which
\[
\det\nabla\eta(t_*,x_*)=0.
\]
Thus the result should be understood as a quantitative sufficient condition for finite-time Lagrangian degeneracy, rather than as a sharp global critical-threshold characterization.
\end{remark}

%
%
%
%
%

\subsection{Two-dimensional supercritical initial data}

We specialize to $d=2$ and establish a   perturbative finite-time criterion ensuring that the Jacobian $J(t,x)=\det\nabla\eta(t,x)$ becomes negative at a controlled time   along a characteristic issued from a point $x_*\in\supp\rho_0$. The argument relies on the decomposition of $\nabla\eta(t,x_*)$ into a leading symmetric part determined by $\nabS u_0(x_*)$ and a perturbative term collecting the rotational component together with the nonlinear effects generated by the spatial variability of $\phi$. Exploiting the explicit determinant expansion available for $2\times2$ matrices, we identify a time $t_1>0$ at which the leading part already has negative determinant, while the perturbation remains sufficiently small. This yields a quantitative condition under which $\det\nabla\eta(t_1,x_*)<0$, and hence $\det\nabla\eta(t_*,x_*)=0$ for some $t_*\in(0,t_1)$.

\begin{theorem}\label{thm:2d_gen_phi}
Let $d=2$ and consider the Lagrange--alignment system \eqref{LA} with $\rho_0\in C^0\cap\calP(\R^2)$, $\phi\in W^{1,\infty}(\R^2;\R_{\ge0})$, and $u_0\in C^1\cap W^{1,\infty}(\R^2;\R^2)$. For $x\in\supp\rho_0$, let $\mu_1(x)\le\mu_2(x)$ be the eigenvalues of $\nabS u_0(x)$, and set $\omega_0(x):=(\nabla\times u_0)(x)$. Assume that there exist $x_*\in\supp\rho_0$, $\delta>0$, and $t_1>0$ such that, 
\[
L_*:=-\mu_1(x_*) >\kappa(1+\delta)\|\phi\|_{L^\infty},
\qquad
\alpha_\infty(t_1) =\frac{1-e^{-\kappa\|\phi\|_{L^\infty}t_1}} {\kappa\|\phi\|_{L^\infty}} =\frac{1+\delta}{L_*},
\]
and in addition that, $1+\mu_2(x_*)t_1>0.$\footnote{Which is required only when $\mu_2(x_*)<0$.}\newline
Suppose further the following smallness hypothesis
\bq\label{eq:small_gen2d}
 E_{t_1}(x_*) < \sqrt{N_*^2+ \delta m_{2,*}} - N_*, \  E_{t_1}(x_*) := t_1\frac{|\omega_0(x_*)|}{2} +\frac{C_1^*}{C_0^2} \lt(e^{C_0t_1}-1-C_0t_1\rt),
\eq
where  $C_0, C_1^*>0$ are the constants specified  in Lemma \ref{lem:gro}, and $m_{2,*}, N_*$ are given by
\[
m_{2,*}:= \begin{cases}
1, & \mu_2(x_*)\ge0,\\
1+\mu_2(x_*)t_1, & \mu_2(x_*)<0,
\end{cases}
\qquad
N_*:=\max\lt\{L_*t_1-1,\ 1+|\mu_2(x_*)|t_1\rt\}.
\]
 Then there exists $t_*=t_*(x_*)\in(0,t_1)$ such that
\[
\det\nabla\eta(t_*,x_*)=0.
\]
\end{theorem}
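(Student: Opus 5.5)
We will show that $J(t_1,x_*)=\det\nabla\eta(t_1,x_*)<0$. Since $J(0,x_*)=1$ and $t\mapsto J(t,x_*)$ is continuous, the intermediate value theorem then gives $t_*\in(0,t_1)$ with $J(t_*,x_*)=0$.

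\emph{Setup.} We start from the decomposition \eqref{eq:decom_gen} at $t=t_1$. Write $\alpha_1:=\alpha(t_1,x_*)$. By the bounds on the effective time,
\[
\alpha_1\in[\alpha_\infty(t_1),t_1]=\Big[\tfrac{1+\delta}{L_*},\,t_1\Big].
\]
Rotate coordinates by an orthogonal $Q$ diagonalizing $\nabS u_0(x_*)$; this changes neither determinants nor operator norms. Then
\[
\nabla\eta(t_1,x_*)=D+E,\qquad D=\diag(d_1,d_2),\quad d_j=1+\alpha_1\mu_j .
\]
The first factor satisfies $d_1=1-\alpha_1L_*\le -\delta<0$, and $|d_1|\le L_*t_1-1$. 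For the second factor, if $\mu_2\ge0$ then $d_2\ge1=m_{2,*}$. If $\mu_2<0$, then $d_2\ge1+\mu_2t_1=m_{2,*}>0$, using $\alpha_1\le t_1$ and the footnote hypothesis. In both cases $|d_2|\le 1+|\mu_2|t_1$. Hence
\[
\det D=d_1d_2\le-\delta m_{2,*},\qquad \max\{|d_1|,|d_2|\}\le N_*.
\]

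\emph{Bound on the perturbation.} Next we bound $\|E\|\le \alpha_1\|\nabA u_0(x_*)\|+\|R(t_1,x_*)\|$. In two dimensions $\|\nabA u_0(x_*)\|=|\omega_0(x_*)|/2$, and $\alpha_1\le t_1$. Lemma \ref{lem:gro} controls $\|R(t_1,x_*)\|$. Together these give $\|E\|\le E_{t_1}(x_*)=:\varepsilon$.

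\emph{Determinant expansion.} This step is the crux. We use the exact $2\times2$ identity
\[
\det(D+E)=\det D+\big(d_1E_{22}+d_2E_{11}\big)+\det E .
\]
Each entry satisfies $|E_{ij}|\le\|E\|$, and $|\det E|\le\|E\|^2$ since $|\det E|=\sigma_1\sigma_2$. The middle term is bounded by $(|d_1|+|d_2|)\varepsilon$. This naive estimate gives $2N_*\varepsilon$, which matches the hypothesis exactly: we obtain
\[
\det(D+E)\le -\delta m_{2,*}+2N_*\varepsilon+\varepsilon^2,
\]
and the right-hand side is negative precisely when $\varepsilon<\sqrt{N_*^2+\delta m_{2,*}}-N_*$, which is \eqref{eq:small_gen2d}. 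So the estimate closes by solving this quadratic inequality in $\varepsilon$.

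\emph{Main obstacle.} The step needing care is the sign analysis of $d_2$ on the whole range $\alpha_1\in[\alpha_\infty(t_1),t_1]$. We do not know $\alpha_1$ exactly, only these bounds, and the footnote hypothesis $1+\mu_2t_1>0$ is what keeps $d_2$ positive and bounded below by $m_{2,*}$. We also need $|d_1|\le N_*$, which requires $L_*t_1-1\ge\alpha_1L_*-1$; this holds because $\alpha_1\le t_1$.
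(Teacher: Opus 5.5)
Your proposal is correct and follows the paper's proof essentially step for step. You use the same decomposition at $t_1$, the same bounds $d_1\le-\delta$, $d_2\ge m_{2,*}$, $\max_j|d_j|\le N_*$ and $\|E\|\le E_{t_1}(x_*)$, the same exact $2\times2$ determinant identity (your middle term $d_1E_{22}+d_2E_{11}$ is precisely the paper's $\tr(\cof(D_*)^\top E_*)$), and the same intermediate value argument to conclude.
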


\begin{proof}
 Let $x_*\in\supp\rho_0$ be a point satisfying the assumptions. By the algebraic decomposition and effective-time representation \eqref{eq:decom_gen}, we may write
\[
\nabla\eta(t,x_*) = I_2 + \alpha(t,x_*) S_* +  E_*(t), \quad   E_*(t):= \alpha(t,x_*) A_*+  R(t,x_*),
\]
where $A_* = \nabA u_0(x_*)$, and the effective time $\alpha(t,x)$ satisfies the uniform bounds
\[
\alpha_\infty(t)\le \alpha(t,x)\le t.
\]
Moreover, by Lemma \ref{lem:gro}, the remainder $R$ obeys the estimate
\[
\|R(t,x_*)\| \le \frac{C_1^*}{C_0^2}\lt(e^{C_0 t}-1-C_0 t\rt).
\]

We now evaluate the decomposition at the time $t_1$ chosen so that the lower bound $\alpha_\infty(t_1)=(1+\delta)/L_*$ with $L_*=-\mu_1>0$ forces the most compressive factor $1+\mu_1\alpha(t_1,x_*)$ to be negative with margin at least $-\delta$. Choosing an orthonormal eigenbasis of $S_*$ so that $S_*=\diag(\mu_1,\mu_2)$, we introduce
\[
D_*:=I_2+\alpha(t_1,x_*)\,S_* =\diag\lt(1+\mu_1\alpha(t_1,x_*),\,1+\mu_2\alpha(t_1,x_*)\rt).
\]
Since $\alpha(t_1,x_*)\ge\alpha_\infty(t_1)=(1+\delta)/L_*$, we have $1+\mu_1\alpha(t_1,x_*)=1-L_*\alpha(t_1,x_*)\le-\delta<0$. If $\mu_2\ge0$, then trivially $1+\mu_2\alpha(t_1,x_*)\ge1$. If $\mu_2<0$, using $\alpha(t_1,x_*)\le t_1$ and the additional assumption $1+\mu_2 t_1>0$, we obtain $1+\mu_2\alpha(t_1,x_*)\ge 1+\mu_2 t_1>0$. In both cases, it follows that
\bq\label{eq:detD_upper}
\det D_* =\lt(1+\mu_1\alpha(t_1,x_*)\rt)\lt(1+\mu_2\alpha(t_1,x_*)\rt) \le (-\delta)\,m_2<0.
\eq
where $m_2=1$ if $\mu_2\ge0$ and $m_2=1+\mu_2 t_1$ if $\mu_2<0$.

Next, since $\alpha(t_1,x_*)\le t_1$, we estimate
\[
|1+\mu_1\alpha(t_1,x_*)|\le L_*t_1-1, \quad |1+\mu_2\alpha(t_1,x_*)|\le 1+|\mu_2|t_1,
\]
and hence
\bq\label{eq:D_norm_bound}
\|D_*\|\le N=\max\{L_*t_1-1,\ 1+|\mu_2|t_1\}.
\eq
In two dimensions,
\[
A_*=\frac12
\begin{pmatrix}
0&-\omega_*\\
\omega_*&0
\end{pmatrix},
\quad
\|A_*\|=\frac{|\omega_*|}{2},
\]
and using $\alpha(t_1,x_*)\le t_1$ together with the perturbative bound on $R$, we find
\bq\label{eq:E_bound}
\begin{split}
\|E_*(t_1)\|
&\le \alpha(t_1,x_*)\|A_*\|+\|R(t_1,x_*)\| \\
&\le t_1\frac{|\omega_*|}{2}+\frac{C_1^*}{C_0^2}\lt(e^{C_0t_1}-1-C_0t_1\rt)
=E_{t_1}.
\end{split}
\eq

Since $\nabla\eta(t_1,x_*)=D_*+E_*(t_1)$, we use the exact determinant identity for $2\times2$ matrices,
\[
\det(D_*+E_*)=\det D_*+\tr(\cof(D_*)^\top E_*)+\det E_*,
\]
together with the elementary bounds $|\tr Z|\le2\|Z\|$, $\|\cof(D_*)\|=\|D_*\|$, and $|\det E_*|\le\|E_*\|^2$, to obtain
\[
\det\nabla\eta(t_1,x_*) \le \det D_*+2\|D_*\| \|E_*(t_1)\|+\|E_*(t_1)\|^2.
\]
Applying \eqref{eq:detD_upper}, \eqref{eq:D_norm_bound}, and \eqref{eq:E_bound} yields
\[
\det\nabla\eta(t_1,x_*) \le -\delta m_2 + 2N E_{t_1}+E_{t_1}^2.
\]
The smallness hypothesis \eqref{eq:small_gen2d} therefore implies $\det\nabla\eta(t_1,x_*)<0$.

Finally, the map $t\mapsto\det\nabla\eta(t,x_*)$ is continuous and satisfies $\det\nabla\eta(0,x_*)=\det I_2=1>0$, hence by the intermediate value theorem there exists $t_*\in(0,t_1)$ such that 
\[
\det\nabla\eta(t_*,x_*)=0.
\]
This completes the proof.
\end{proof}

\begin{remark}[A concrete sufficient scenario in two dimensions]
We record a simple consequence of Theorem \ref{thm:2d_gen_phi} in the case where the second eigenvalue of the symmetric part is nonnegative. Assume that
\[
\mu_2\ge0,\quad L_*:=-\mu_1>0,\quad a:=\kappa\|\phi\|_{L^\infty},\quad \omega_*:=(\nabla\times u_0)(x_*).
\]
Fix $\delta=1$ and let $t_1>0$ be determined by
\[
\alpha_\infty(t_1)=\frac{1-e^{-a t_1}}{a}=\frac{2}{L_*}.
\]
Assume in addition that
\[
L_*\ge4a\quad\text{and}\quad L_*\ge4C_0.
\]
Then
\[
t_1=\frac1a\log\frac1{1-\frac{2a}{L_*}}\le\frac{4}{L_*}.
\]
In particular, $C_0t_1\le1$, and hence $e^z-1-z\le(e/2)z^2$ for $0\le z\le1$ gives
\[
E_{t_1}\le \frac{|\omega_*|}{2}t_1+\frac{eC_1^*}{2}t_1^2
\le \frac{2|\omega_*|}{L_*}+\frac{8eC_1^*}{L_*^2}
=:B(L_*).
\]
Moreover, since $\mu_2\ge0$, we have $m_2=1$, and
\[
N=\max\{L_*t_1-1,\ 1+\mu_2t_1\}\le 4\lt(1+\frac{\mu_2}{L_*}\rt).
\]
Thus the smallness condition \eqref{eq:small_gen2d} is ensured by
\bq\label{eq:BL}
8\lt(1+\frac{\mu_2}{L_*}\rt)B(L_*)+B(L_*)^2<1.
\eq
Consequently, if $\mu_2/L_*$ and $|\omega_*|/L_*$ remain controlled, the assumptions of Theorem \ref{thm:2d_gen_phi} are satisfied for sufficiently large $L_*=-\mu_1$. In particular, a sufficiently strong negative eigenvalue of $\nabS u_0(x_*)$ forces finite-time Lagrangian degeneracy even when no negative divergence condition is imposed. This condition is still non-sharp, but it is more explicit than the dimension-uniform criterion because the two-dimensional determinant identity gives the sharper perturbative condition \eqref{eq:small_gen2d}.

Finally, we note that the dependence of $C_1^*$ on $\|\nabla u_0(x_*)\|$ does not create a circular obstruction. Indeed, in the above estimate this dependence enters only through the term $\frac{8eC_1^*}{L_*^2}$ in $B(L_*)$. In particular, when $\mu_2/L_*$ remains controlled, the condition \eqref{eq:BL} is ensured by taking $L$ larger than a constant multiple of $|\omega_*|+\sqrt{C_1^*}$. Thus the contribution of $C_1^*$ is only of square-root order. Since $C_1^*$ depends linearly on $\|\nabla u_0(x_*)\|$, a sufficiently strong compressive eigenvalue $L_*=-\mu_1$ still enforces finite-time degeneracy; the dependence on the full local gradient is lower order than the leading compression.
\end{remark}

\begin{remark}[Comparison with 2D critical-threshold blow-up results]\label{rem:2d_CT_blowup_comp}
In \cite{TT14},  finite-time blow-up for the two-dimensional Euler--alignment system, i.e., \eqref{EA} with $d=2$, is obtained under a \emph{negative divergence condition} on the initial data, namely
\[
\inf_{x \in \supp(\rho_0)} (\nabla \cdot u_0)(x) < -\frac12 \lt(\kappa + \sqrt{\kappa^2 + 4 \kappa\, \rdv  \|\phi\|_{\dot W^{1,\infty}}} \rt)
\]
together with additional structural assumptions on the off-diagonal components of
$\nabla u_0$. Under these hypotheses, it is shown that $\inf_{\supp\rho(t,\cdot)}(\nabla \cdot u)(t,\cdot)\to-\infty$ in finite time.

In contrast, Theorem \ref{thm:2d_gen_phi} does \emph{not} impose any sign condition on the initial divergence. Instead, our criterion is formulated in terms of the local spectral data of the symmetric gradient $S_*=\nabS u_0(x_*)$, requiring only that its minimal eigenvalue $\mu_1$ be sufficiently negative. In particular, the divergence $\mu_1+\mu_2$ at $x_*$ may be positive.

A key distinction lies in the role of the off-diagonal components of $\nabla u_0$. In our approach, the skew-symmetric part $A_*=\nabA u_0(x_*)$ enters only through its scalar size $|\omega_*|$ in the perturbative quantity $E_{t_1}$. No sign condition, lower bound, or structural coupling involving the off-diagonal entries is imposed. Large rotational components are therefore not excluded a priori; they simply make the perturbative smallness requirement more stringent. This is in clear contrast with divergence-based critical-threshold theories, where the evolution of divergence is coupled more directly to the vorticity through invariant-region arguments.
\end{remark}

%
%
%
%
%

\subsection{Higher-dimensional supercritical initial data}

We now extend the   perturbative degeneracy analysis to arbitrary dimension $d\ge2$. In contrast to the two-dimensional case, no exact low-dimensional determinant expansion is available, so we adopt a dimension-uniform perturbative strategy. The basic idea is to decompose $\nabla\eta(t,x_*)$ into a diagonal leading part generated by the symmetric gradient $\nabS u_0(x_*)$ and a perturbation collecting both the skew-symmetric component and the nonlinear remainder induced by the spatial variability of $\phi$. The sign of the determinant is then inferred by factoring out the leading part and controlling the residual factor through an operator-norm estimate. Although this argument is less sharp than the two-dimensional one, it yields a clean sufficient criterion valid in all dimensions.  

 We now prove the dimension-uniform criterion stated in Theorem \ref{thm:gen_phi_d}.

\begin{proof}[Proof of Theorem \ref{thm:gen_phi_d}]
We evaluate the decomposition \eqref{eq:decom_gen} at the prescribed time $t_1$ and show that the leading diagonal part has a negative determinant, while the perturbation remains too small to change its sign.   Let $x_*\in\supp\rho_0$ be a point satisfying the assumptions. By the decomposition \eqref{eq:decom_gen}, for every $t\ge0$ we may write
\[
\nabla\eta(t,x_*) = I_d+\alpha(t,x_*)\,S_*+E_*(t),
\quad
E_*(t):=\alpha(t,x_*)\,A_*+R(t,x_*).
\]
The effective time satisfies the uniform bounds
\[
\alpha_\infty(t)\le \alpha(t,x_*)\le t,
\]
and applying Lemma \ref{lem:gro} at time $t=t_1$ yields the estimate
\bq\label{eq:E_gend_comp_alpha}
\|E_*(t_1)\|
\le
\alpha(t_1,x_*)\|A_*\|+\|R(t_1,x_*)\|
\le
t_1\|A_*\|+\frac{C_1^*}{C_0^2}\lt(e^{C_0t_1}-1-C_0t_1\rt).
\eq

Let $Q\in O(d)$ be an orthogonal matrix diagonalizing the symmetric part $S_*$, so that $Q^\top S_*Q=\diag(\mu_1,\dots,\mu_d)$. Setting $\alpha_*:=\alpha(t_1,x_*)\in[\alpha_\infty(t_1),t_1]$ and defining
\[
D:=I_d+\alpha_*\,Q^\top S_*Q = \diag(1+\alpha_*\mu_1,\dots,1+\alpha_*\mu_d),
\quad
\widetilde E:=Q^\top E_*(t_1)Q,
\]
the invariance of the determinant and operator norm under orthogonal conjugation gives
\[
\det\nabla\eta(t_1,x_*)=\det(D+\widetilde E),
\quad
\|\widetilde E\|=\|E_*(t_1)\|.
\]

By the separation condition \eqref{eq:sep_gen_d_alpha}, the matrix $D$ is invertible and
\[
\|D^{-1}\| = \max_{1\le j\le d}\frac{1}{|1+\alpha_*\mu_j|} \le \frac{1}{\delta}.
\]
Moreover, \eqref{eq:sep_gen_d_alpha} prevents any sign change of the factors $1+\alpha\mu_j$ for $\alpha\in[\alpha_\infty(t_1),t_1]$, so the parity of the number of negative diagonal entries of $D$ is constant on this interval. Since
\[
r = \# \{ j : 1+\alpha_\infty(t_1)\mu_j<0\}
\]
is odd by assumption, it follows that
\[
\det D=\prod_{j=1}^d(1+\alpha_*\mu_j)<0,
\quad
|\det D|=\prod_{j=1}^d|1+\alpha_*\mu_j|\ge \delta^d,
\]
and in particular $\det D\le -\delta^d<0$.

Factoring out the leading diagonal part, we write
\[
\det(D+\widetilde E)=\det D\cdot \det(I_d+X), \quad X:=D^{-1}\widetilde E.
\]
Combining \eqref{eq:E_gend_comp_alpha} with the bound on $\|D^{-1}\|$, we obtain
\[
\|X\| \le \frac{\|E_*(t_1)\|}{\delta} \le \frac{t_1\|A_*\|+\frac{C_1^*}{C_0^2}(e^{C_0t_1}-1-C_0t_1)}{\delta}  \le \frac{t_1\|A_*\| + C_1^* t_1^2 e^{C_0 t_1}}{\delta}.
\]
The smallness condition \eqref{eq:small_d_alpha} therefore ensures that $\|X\|<1$.

Since $\|X\|<1$, the same continuity argument as in the constant-kernel case implies
$\det(I_d+X)>0$. Consequently,
\[
\det\nabla\eta(t_1,x_*) = \det(D+\widetilde E) = \det D\cdot \det(I_d+X) < 0.
\]
As $\det\nabla\eta(0,x_*)=\det I_d=1>0$ and the map $t\mapsto\det\nabla\eta(t,x_*)$ is continuous, there exists $t_*\in(0,t_1)$ such that
\[
\det\nabla\eta(t_*,x_*)=0.
\]
The density blow-up assertion follows from Proposition \ref{prop:one_sided_density} below.  This completes the proof.
\end{proof}

\begin{remark}[Relation with the two-dimensional case]
Although Theorem \ref{thm:gen_phi_d} applies for all $d\ge2$, its specialization to $d=2$ is strictly more conservative than Theorem \ref{thm:2d_gen_phi}. Indeed, the present argument relies only on the factorization
\[
\det(D+E)=\det D\,\det(I_2+D^{-1}E),
\]
together with a uniform operator-norm smallness condition ensuring that the perturbation does not alter the sign of the determinant. This yields a dimension-independent criterion, but one that is necessarily restrictive.

By contrast, in two dimensions one can use the exact identity
\[
\det(D+E)=\det D+\tr(\cof(D)^\top E)+\det E,
\]
which captures the perturbation more precisely and leads to the sharper condition \eqref{eq:small_gen2d}. This refinement is specific to the $2\times2$ structure and has no direct analogue in higher dimensions.

In particular, when $d=2$, the parity condition in Theorem \ref{thm:gen_phi_d} reduces to the requirement that exactly one of the factors $1+\alpha_\infty(t_1)\mu_j$ is negative, in agreement with the two-dimensional framework. The additional separation condition \eqref{eq:sep_gen_d_alpha} imposed in Theorem \ref{thm:gen_phi_d} is stronger than the corresponding assumption in Theorem \ref{thm:2d_gen_phi}, and is introduced to ensure a uniform perturbative control in arbitrary dimension.
\end{remark}

We conclude this section with a general lower blow-up estimate. Independently of the specific supercritical criterion used above, if the Lagrangian Jacobian vanishes for the first time at some finite $t_*>0$ along a classical trajectory, then the density necessarily blows up at least at the rate $(t_*-t)^{-1}$ along that characteristic.
 \begin{proposition}\label{prop:one_sided_density}  
 Consider the Lagrange--alignment system \eqref{LA}. Assume $\rho_0\in C^0\cap\calP(\R^d)$ and $u_0\in C^1\cap W^{1,\infty}(\R^d;\R^d)$ and fix $x_*\in\supp(\rho_0)$. Suppose that the first breakdown time at $x_*$ is $t_*\in(0,\infty)$, i.e.
\[
J(t,x_*)>0\ \text{for }0\le t<t_*, \quad J(t_*,x_*)=0.
\]
Then there exists a constant $C>0$ such that
\[
0<J(t,x_*)\le C(t_*-t)\quad\text{for all $t < t_*$.}
\]
In particular, if $\rho_0(x_*)>0$, then for $t<t_*$,
\[
\rho(t,\eta(t,x_*)) \ge \frac{c}{t_*-t}
\]
for some $c>0$.
\end{proposition}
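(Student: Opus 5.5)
The idea is to show that $J(\cdot,x_*)$ is Lipschitz in time on $[0,t_*]$ and then apply the mean value theorem at the zero $t_*$. Jacobi's formula gives
\[
\pa_t J(t,x_*)=\tr\bigl(\cof(\nabla\eta(t,x_*))^\top\,\pa_t\nabla\eta(t,x_*)\bigr)
=\tr\bigl(\cof(\nabla\eta(t,x_*))^\top\,\nabla v(t,x_*)\bigr),
\]
where we use $\pa_t\nabla\eta=\nabla v$ directly in Lagrangian variables. This avoids the Eulerian divergence, which may blow up.

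\textbf{Bounding $\pa_tJ$.} The proof of Lemma~\ref{lem:gro} already gives the Gr\"onwall estimate
\[
\|\nabla\eta(t,x_*)\|+\|\nabla v(t,x_*)\|\le\bigl(1+\|\nabla u_0(x_*)\|\bigr)e^{C_0t}.
\]
That argument uses only $\phi\ge0$, $\phi\in W^{1,\infty}$, and the maximum principle for $\rd_v$. I assume these standing hypotheses on $\phi$ are implicit in this proposition as well. The estimate holds on every finite interval, in particular on $[0,t_*]$. We also have $\|\cof(A)\|\le\|A\|^{d-1}$ and $|\tr(B^\top Z)|\le d\|B\|\|Z\|$. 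Combining these,
\[
|\pa_tJ(t,x_*)|\le d\,\bigl(1+\|\nabla u_0(x_*)\|\bigr)^d e^{dC_0t_*}=:C
\quad\text{for } t\in[0,t_*].
\]

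\textbf{Mean value step.} Since $J(t_*,x_*)=0$, for $t<t_*$ we get
\[
J(t,x_*)=J(t,x_*)-J(t_*,x_*)=-\int_t^{t_*}\pa_sJ(s,x_*)\,\ds\le C(t_*-t).
\]
Positivity of $J(t,x_*)$ for $t<t_*$ is the first-breakdown hypothesis. The density bound then follows from the Lagrangian identity $\rho(t,\eta(t,x_*))=\rho_0(x_*)/J(t,x_*)$, which gives the estimate with $c=\rho_0(x_*)/C$.

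\textbf{Main obstacle.} The one delicate point is justifying that $\nabla\eta$ and $\nabla v$ remain classical and bounded up to and including $t_*$. The Lagrangian system \eqref{LA} stays well posed as an ODE system in $(\eta,v)$ even when the Eulerian solution breaks down, because its right-hand side is Lipschitz in $\eta$ and $v$ for $\phi\in W^{1,\infty}$. The differentiated system is linear in $(\nabla\eta,\nabla v)$ with bounded coefficients. Hence both quantities are continuous in $t$ and obey the Gr\"onwall bound uniformly up to $t_*$. I would state this explicitly, noting that only the Lagrangian quantities are used, never $\nabla u$ at $\eta(t,x_*)$. With that in place, $J$ is $C^1$ on $[0,t_*]$ and the argument above goes through.
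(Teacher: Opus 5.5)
Your proposal is correct and follows the paper's proof. You bound $|\pa_t J|\le d\,\|\nabla\eta\|^{d-1}\|\nabla v\|$ using the adjugate form of Jacobi's formula; the paper gets the same bound from column multilinearity and Hadamard's inequality. You then control this uniformly on $[0,t_*]$ with the Gr\"onwall bound from the proof of Lemma~\ref{lem:gro} and integrate from $t$ to $t_*$, exactly as the paper does. Your remarks that $\phi\in W^{1,\infty}$ and $\phi\ge0$ must be assumed, and that only the Lagrangian quantities $(\nabla\eta,\nabla v)$ are used and stay classical up to $t_*$, spell out what the paper leaves implicit.
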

\begin{proof}
Fix $x_*\in\supp\rho_0$. Note that $\nabla\eta(t,x_*)$ is $C^1$ in time and satisfies
\[
\partial_t \nabla\eta(t,x_*)=\nabla v(t,x_*).
\]
Let $m_1(t),\dots,m_d(t)\in\R^d$ denote the column vectors of $\nabla\eta(t,x_*)$, so that
$J(t)=\det(m_1(t),\dots,m_d(t))$.
By multilinearity of the determinant in the columns, we may differentiate:
\[
J'(t) = \det(\nabla\eta(t,x_*))' = \sum_{k=1}^d \det\big(m_1(t),\dots,m_{k-1}(t),m_k'(t),m_{k+1}(t),\dots,m_d(t)\big).
\]
Using Hadamard's inequality $|\det(a_1,\dots,a_d)|\le \prod_{j=1}^d |a_j|$,
we obtain
\[
|J'(t)|
\le \sum_{k=1}^d \bigg(\prod_{j\neq k}|m_j(t)|\bigg)\,|m_k'(t)|.
\]
Now note that $|m_j(t)|\le \|\nabla\eta(t,x_*)\|$ for all $j$, and $|m_k'(t)|\le \|\nabla v(t,x_*)\|$.
Hence, we have
\[
|J'(t)| \le  d\,\|\nabla\eta(t,x_*)\|^{d-1}\,\|\nabla v(t,x_*)\|.
\]
By Lemma \ref{lem:gro}, there exists a constant $C>0$ such that
\[
|J'(t)|\le C \quad\text{for all }0\le t<t_*.
\]

Since $J(t_*)=0$, the fundamental theorem of calculus gives, for any $t<t_*$,
\[
|J(t)|=|J(t)-J(t_*)| = \lt|\int_t^{t_*} J'(s)\,\ds\rt| \le \int_t^{t_*} |J'(s)|\,\ds \le C(t_*-t).
\]
This, together with $J(t)>0$ for $t<t_*$ by hypothesis, yields
\[
0<J(t,x_*)\le C(t_*-t)
\]
for all $t<t_*$.

Finally, if $\rho_0(x_*)>0$, then combining this with $J(t,x_*)\le C(t_*-t)$ yields
\[
\rho(t,\eta(t,x_*)) =\frac{\rho_0(x_*)}{J(t,x_*)} \ge \frac{\rho_0(x_*)}{C}\,\frac1{t_*-t},
\]
which is the claimed one-sided blow-up lower bound with $c=\rho_0(x_*)/C$.
\end{proof}


%
%
%
%

\bibliographystyle{abbrv}
\bibliography{CT_EA}

\end{document}